\setlist[enumerate,1]{label=(\roman*)}
\numberwithin{equation}{section}
\declaretheoremstyle[
  shaded={bgcolor=\thmcolor}
]{plain}
\declaretheoremstyle[
  headfont=\normalfont\bfseries,
  bodyfont=\normalfont,
  shaded={bgcolor=\defcolor}
]{noital}
\declaretheoremstyle[
  headfont=\normalfont\bfseries,
  bodyfont=\normalfont,
]{noital}
\declaretheorem[style=plain,numberwithin=section,name=Theorem]{theorem}
\declaretheorem[style=plain,sibling=theorem,name=Lemma]{lemma}
\declaretheorem[style=plain,sibling=theorem,name=Conjecture]{conjecture}
\declaretheorem[style=plain,sibling=theorem,name=Question]{question}
\declaretheorem[style=plain,sibling=theorem,name=Observation]{observation}
\declaretheorem[style=plain,numbered=no,name=Theorem]{theorem-n}
\declaretheorem[style=plain,numbered=no,name=Proposition]{proposition-n}
\declaretheorem[style=plain,numbered=no,name=Lemma]{lemma-n}
\declaretheorem[style=plain,numbered=no,name=Corollary]{corollary-n}
\declaretheorem[style=plain,numbered=no,name=Conjecture]{conjecture-n}
\declaretheorem[style=plain,numbered=no,name=Claim]{claim-n}
\declaretheorem[style=plain,numbered=no,name=Fact]{fact-n}
\declaretheorem[style=plain,numbered=no,name=Open Problem]{openproblem-n}
\declaretheorem[style=plain,numbered=no,name=Question]{question-n}
\declaretheorem[style=plain,numbered=no,name=Observation]{observation-n}
\declaretheorem[style=noital,sibling=theorem,name=Definition]{definition}
\declaretheorem[style=noital,numbered=no,name=Remark]{remark-n}
\declaretheorem[style=noital,numbered=no,name=Definition]{definition-n}
\declaretheorem[style=noital,numbered=no,name=Construction]{construction-n}
\declaretheorem[style=noital,numbered=no,name=Example]{example-n}
\newcommand{\defined}{\mathrel{\coloneqq}}
\DeclarePairedDelimiter{\p}{\lparen}{\rparen}
\newcommand{\st}{\mathbin{\colon}}
\DeclarePairedDelimiter{\set}{\lbrace}{\rbrace}
\newcommand{\emptyset}{\varnothing}
\DeclarePairedDelimiter{\card}{\lvert}{\rvert}
\newcommand{\union}{\mathbin{\cup}}
\newcommand{\inter}{\mathbin{\cap}}
\newcommand{\from}{\colon}
\DeclarePairedDelimiter{\floor}{\lfloor}{\rfloor}
\newcommand{\setm}[1]{\setminus\set{#1}}
\newcommand{\mod}[1]{\ (\mathrm{mod}\ #1)}
\DeclarePairedDelimiterX{\abs}[1]
  {\lvert}{\rvert}{\ifblank{#1}{\,\cdot\,}{#1}}
\DeclarePairedDelimiterX{\norm}[1]
  {\lVert}{\rVert}{\ifblank{#1}{\,\cdot\,}{#1}}
\DeclarePairedDelimiterX{\inner}[2]
  {\langle}{\rangle}{\ifblank{#1}{\,\cdot\,}{#1},\ifblank{#2}{\,\cdot\,}{#2}}
\DeclarePairedDelimiterX{\absinner}[2]
  {|\langle}{\rangle|}{\ifblank{#1}{\,\cdot\,}{#1},\ifblank{#2}{\,\cdot\,}{#2}}
\DeclareMathDelimiter{\given}
  {\mathbin}{symbols}{"6A}{largesymbols}{"0C}
\DeclareMathOperator{\Prob}{\mathbb{P}}
\DeclarePairedDelimiterXPP{\prob}[1]
  {\Prob}{\lparen}{\rparen}{}
  {\renewcommand{\given}{\nonscript\;\delimsize\vert\nonscript\;\mathopen{}}#1}
\DeclareMathOperator{\Expec}{\mathbb{E}}
\DeclarePairedDelimiterXPP{\expec}[1]
  {\Expec}{\lparen}{\rparen}{}
  {\renewcommand{\given}{\nonscript\;\delimsize\vert\nonscript\;\mathopen{}}#1}
\DeclareMathOperator{\Var}{Var}
\DeclarePairedDelimiterXPP{\var}[1]
  {\Var}{\lparen}{\rparen}{}
  {\renewcommand{\given}{\nonscript\;\delimsize\vert\nonscript\;\mathopen{}}#1}
\DeclareMathOperator{\Cov}{Cov}
\DeclarePairedDelimiterXPP{\cov}[2]
  {\Cov}{\lparen}{\rparen}{}{#1,#2}
\newcommand{\eps}{\varepsilon}
\newcommand{\sseq}{\subseteq}
\let\l\relax
\newcommand{\l}{\ell}
\let\SS\relax
\newcommand{\RR}{\mathbb{R}}
\newcommand{\SS}{\mathbb{S}}
\newcommand{\ZZ}{\mathbb{Z}}
\newcommand{\cZ}{\mathcal{Z}}
\titleformat{\section}{\centering\bfseries\scshape\Large}{\thesection}{1em}{}
\titleformat{\subsection}{\bfseries\scshape\large}{\thesubsection}{1em}{}
\newcommand{\lhc}[1]{}
\newcommand{\agnijoc}[1]{}
\newcommand{\vol}[1]{\text{Vol}(#1)}
\newcommand{\swap}{\leftrightarrow}
\begin{document}






\title{\textsc{\bfseries Uniformly balanced $H$-factors in multicoloured complete graphs}}

\renewcommand{\thefootnote}{\fnsymbol{footnote}}

\author{\textsc{Agnijo Banerjee}\footnotemark[1] \and \textsc{Lawrence Hollom}\footnotemark[1]}

\footnotetext[1]{\href{mailto:ab2558@cam.ac.uk}{ab2558@cam.ac.uk} and \href{mailto:lh569@cam.ac.uk}{lh569@cam.ac.uk} respectively. Department of Pure Mathematics and Mathematical Statistics (DPMMS), University of Cambridge, Wilberforce Road, Cambridge, CB3 0WA, United Kingdom}

\renewcommand{\thefootnote}{\arabic{footnote}}

\maketitle

\begin{abstract}
    A \emph{balanced colouring} of a graph is one in which every colour appears the same number of times.
    Given a fixed graph $H$ on $r$ vertices and a balanced $k$-colouring of the complete graph $K_{nrk}$, Hollom (2025) asked the following question: can we always find an $H$-factor $F$ covering all vertices of the complete graph $K_{nrk}$ such that the inherited colouring of $F$ is almost balanced?
    This is known to be the case for palettes of only two colours, or when $H$ is only a single edge.
    We answer the above question in full, finding an $H$-factor which is at most $C_{r,k}$ edges away from being balanced, where $C_{r,k}$ depends only on $r$ and $k$.
    In fact, we work in the more general setting wherein our palette of colours is a subset of $\SS^{d-1}$, and find an $H$-factor where the sum of the colours of all edges has bounded Euclidean norm.
\end{abstract}


\section{Introduction}
\label{sec:intro}

Given a colouring of the complete graph $K_n$, questions concerning when we can find specific subgraphs $F \sseq K_n$ whose colourings have particular properties are found throughout combinatorics.
For example, Ramsey theory asks for which graphs $F$ one can guarantee a monochromatic copy of $F$ in $K_n$.
More generally, discrepancy theory asks merely for a copy of $F$ with one colour occurring as frequently as possible.
Conversely, one may also ask for balanced structures, wherein each colour should occur in $F$ (approximately) the same number of times. Indeed, we call an edge-colouring of a graph \emph{balanced} if each colour occurs exactly the same number of times.

We work in the latter setting, letting $F$ be an $H$-factor for some fixed graph $H$ (that is, a graph consisting of vertex-disjoint copies of $H$) and answer a question of Hollom \cite{Hol24}, proving the following theorem.

\begin{theorem}
\label{thm:main-k-colouring}
    For all integers $k,r\geq 2$ there is a constant $C=C_{k,r}$ such that the following holds.
    For every graph $H$ on $r$ vertices, integer $n$ and balanced $k$-colouring $c\from E(K_{nrk}) \to [k]$, there is an $H$-factor $F$ of $K_{nrk}$ such that
    \begin{equation*}
        \sum_{i=1}^k \abs[\bigg]{\card{E(F)\inter c^{-1}(i)} - \frac{\card{E(F)}}{k}} \leq C,
    \end{equation*}
    where $C$ depends only on $k$ and $r$. In particular, we may take
    \[C = (8kr)^{(8kr)^k}.\]
\end{theorem}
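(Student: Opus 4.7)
My plan is to work in the vector-valued framework promised in the abstract. Represent each colour $i\in[k]$ by a unit vector $\chi_i\in\RR^{k-1}$ with $\sum_{i=1}^{k}\chi_i=\bfzero$ (e.g.\ vertices of a regular simplex centred at the origin), and put $\tilde c(e):=\chi_{c(e)}$. The balanced hypothesis becomes $\sum_{e\in E(K_{nrk})}\tilde c(e)=\bfzero$, and the $\l^1$ deficiency in the theorem is controlled, up to a $k$-dependent factor, by $\|S(F)\|_2$, where $S(F):=\sum_{e\in E(F)}\tilde c(e)$. The task thus reduces to producing an $H$-factor with $\|S(F)\|_2$ bounded by a function of $k$ and $r$.

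\textbf{Iterative swaps.} Starting from an arbitrary $H$-factor $F_0$, I would repeatedly perform \emph{swaps}: pick two copies $H_1,H_2\sseq F$ spanning a $2r$-vertex set $U$ and replace them by another pair $H_1',H_2'$ of vertex-disjoint copies of $H$ covering $U$. Such a swap changes $S(F)$ by a vector $\Delta$ of norm $O_r(1)$. I would iterate any swap that strictly decreases $\|S(F)\|_2$; termination is automatic because $S(F)$ lies in a fixed discrete set.

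\textbf{Swap-existence lemma.} The crux is: whenever $\|S(F)\|_2>C_{k,r}$, some swap strictly reduces it. I would argue this by averaging. For each $2r$-set $U\sseq V(K_{nrk})$, the symmetric group $\mathrm{Sym}(U)$ permutes the decompositions $D$ of $U$ into two copies of $H$ and acts transitively on $E(K_U)$, so $\sum_D\sum_{e\in E(D)}\tilde c(e)$ is a fixed scalar multiple of $\sum_{e\in E(K_U)}\tilde c(e)$. Summing over all $U$ that arise as $V(H_1)\union V(H_2)$ for pairs of copies in $F$, the ``all-decompositions'' term telescopes against the global sum $\sum_{e\in E(K_{nrk})}\tilde c(e)=\bfzero$, while the subtracted contribution from the \emph{current} decomposition aggregates to a positive multiple of $S(F)$. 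The expected swap vector therefore points opposite to $S(F)$, so by pigeonhole some specific swap has $\langle\Delta,S(F)\rangle$ sufficiently negative to reduce $\|S(F)\|_2$ whenever $\|S(F)\|_2>C_{k,r}$.

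\textbf{Anticipated obstacle.} Extracting a bound depending only on $k$ and $r$ is the delicate step. Single-scale averaging as above yields only $\|S(F)\|_2=O_r(\sqrt n)$, because the $O(r^2)$ Cauchy--Schwarz residual $\|\Delta\|_2^2$ eats the per-swap gain from $2\langle\Delta,S(F)\rangle$. To reach the stated bound $C_{k,r}=(8kr)^{(8kr)^k}$, the real proof must almost certainly be recursive in $k$: first balance a coarser partition of the palette by invoking the result for fewer colours (e.g.\ merge two colours into one), then refine by sub-swaps within each merged class that are invisible to the coarser partition and hence do not disturb the balance already achieved. The doubly-exponential shape of $C_{k,r}$ is consistent with $k$ nested levels of such refinement, each paying an $(8kr)$-type multiplicative price. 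The principal difficulty I foresee is guaranteeing enough ``sub-swap freedom'' inside each merged class at every level of the recursion so that the finer rebalancing never undoes the coarser balance established previously.
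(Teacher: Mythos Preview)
Your setup (simplex embedding, swap framework, averaging to find a norm-decreasing swap) matches the paper exactly, and you correctly identify the obstacle: the Cauchy--Schwarz residual $\|\Delta\|_2^2=O(r^2)$ swamps the linear gain, so naive averaging yields only $\|S(F)\|_2=O_r(\sqrt n)$. However, your proposed fix---induction on $k$ by merging colours---is not what the paper does, and the ``sub-swap freedom'' difficulty you flag is real and unresolved in your sketch.

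The paper's actual device for breaking the $\sqrt n$ barrier is geometric, not recursive. Working first with $K_r$-factors (and reducing general $H$ to $K_r$ afterwards by a simple $r!$-averaging trick), observe that a swap is encoded by a point $x$ in a finite set $X\subset\ZZ^k$ of size $<(4r)^k$, and whether it helps depends only on $\langle g(x),w\rangle$ where $w=S(F)$ and $g(x)=\sum_i x_i\chi_i$. The key is to replace $w$ by a nearby vector $w'$ chosen so that every ``small'' swap (one with $|\langle g(x),w\rangle|\le L$) satisfies $\langle g(x),w'\rangle=0$ exactly. Then the criterion ``$\langle g(x),w'\rangle>0$'' cleanly separates increasing from decreasing swaps with no grey zone, and the averaging argument goes through losslessly. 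To construct $w'$, one pigeonholes over the at most $|X|$ distinct angles $\theta(x)$ between $g(x)$ and $w^\perp$: some angle $\theta$ has a gap of ratio $\ge\eta$ above it, so the span of $\{g(x):\theta(x)<\theta\}$ is a proper subspace $W_\theta$, and $w'$ is the projection of $w$ onto $W_\theta^\perp$. The doubly-exponential bound $(8kr)^{(8kr)^k}$ arises not from $k$ levels of recursion but from the $\eta^{|X|}$ factor in this pigeonhole, with $|X|\approx(4r)^k$ and $\eta\approx(8kr)^{3k^2}$.
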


We remark here that, while we have removed any $n$-dependence from $C$, it seems highly likely that the true value is far smaller than that given above. 
Indeed, as will be discussed later, it has been conjectured that the correct value of $C$ is $O(k^2)$ when $H$ is a single edge.
However, in general, the authors are not aware of any constructions which show that $C$ must grow with $k$ or $r$.


In fact, we will prove \Cref{thm:main-k-colouring} via a more general result, in which the colours are unit vectors, as follows. 
In the following theorem, and throughout this paper, $\norm{\cdot}$ is the Euclidean norm on $\RR^d$.

\begin{theorem}
\label{thm:main}
    Fix integers $k,r,d\geq 2$, a graph $H$ on $r$ vertices, and a subset $S\sseq \SS^{d-1} \sseq \RR^d$ of size $k$.
    Then there is a constant $C=C_{S,r}$ such that the following holds.
    For every integer $n$, positive real number $\alpha > 0$, and colouring $c\from E(K_{nr}) \to S$ such that $\norm{\sum_{e\in E(K_{nr})} c(e)} \leq \alpha$, there is an $H$-factor $F$ of $K_{nr}$ such that
    \begin{equation*}
        \norm[\bigg]{\sum_{e\in E(F)} c(e) \; } \leq C + \frac{2\alpha}{n}.
    \end{equation*}
    Moreover, if $S$ is the vertex set of a regular $(d+1)$-simplex in $\SS^{d-1}$, then we may take
    \begin{equation*}
        C = (8dr)^{(8dr)^{d+1}}.
    \end{equation*}
\end{theorem}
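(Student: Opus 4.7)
My plan is to prove \Cref{thm:main} by starting with an arbitrary $H$-factor $F_0$ of $K_{nr}$ and iteratively applying local \emph{swap} moves to reduce the Euclidean norm of $\sigma(F) \defined \sum_{e \in E(F)} c(e) \in \RR^d$. A swap takes two $H$-copies in $F$ on disjoint vertex sets $A_1, A_2$ of size $r$ and replaces them with a different pair of $H$-copies on $A_1 \union A_2$; the resulting change in $\sigma$ has norm at most $2\abs{E(H)} \leq r^2$. Averaging $\sigma(F)$ over all $H$-factors shows its mean equals $\frac{n \abs{E(H)}}{\binom{nr}{2}} \sum_{e \in E(K_{nr})} c(e)$, of norm at most $2\alpha / n$, so it suffices to produce a single $H$-factor whose $\sigma$ lies within distance $C$ of this mean.

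For the simplex case, classify each $H$-copy in $F$ by its \emph{edge-colour type}: an assignment of colours from $S$ to the labelled edges of $H$, of which there are at most $(d+1)^{\binom{r}{2}} \leq (8dr)^{d+1}$ possibilities. Pigeonholing the $n$ components of $F$ over these types yields large ``bins'' of components of the same type; swap moves within and across bins generate a discrete set of updates to $\sigma$. An induction on the ambient dimension $d$ drives the overall argument: the base case $d = 1$ corresponds to $S \sseq \set{-1, 1}$, handled by the two-colour result of Hollom~\cite{Hol24}. The inductive step projects the colouring onto a carefully chosen $(d-1)$-dimensional hyperplane containing $d$ of the simplex vertices, applies the inductive hypothesis to balance the projection, and then uses residual swap freedom to balance the remaining coordinate orthogonal to this hyperplane.

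The main obstacle is showing that the achievable set of swap changes is rich enough in direction: whenever $\norm{\sigma(F)}$ exceeds the claimed constant, one must exhibit a swap whose change vector $\Delta$ satisfies $\inner{\Delta}{\sigma(F)} \leq -\Omega_{d,r}(1)$, forcing a strict decrease of $\norm{\sigma(F)}$. Verifying this calls for a second layer of pigeonhole on pairs of type-matching components together with their interactions with the edges to vertices outside the swapped region. The tower-exponential bound $(8dr)^{(8dr)^{d+1}}$ reflects exactly this iterated pigeonhole: once enumerating the $(8dr)^{d+1}$ edge-colour types of a single $H$-copy, and again enumerating the $\leq (8dr)^{(8dr)^{d+1}}$ possible ``global colour-type profiles'' of an entire $H$-factor that must be distinguished. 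The hardest step will be quantitatively showing that, within any such profile, the local swap vectors span the direction of $\sigma(F)$ densely enough to drive the norm below the final constant.
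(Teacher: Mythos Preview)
Your plan has several genuine gaps that would prevent it from going through.

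\textbf{The induction on $d$ does not decouple.} You propose to project onto a hyperplane containing $d$ of the simplex vertices, apply the inductive hypothesis to balance the projection, and then use ``residual swap freedom'' to fix the orthogonal coordinate. But once the inductive hypothesis hands you a specific $H$-factor $F$ with small projected $\sigma(F)$, any swap you perform to adjust the orthogonal coordinate will in general move the projection as well, destroying the balance you just achieved. You give no mechanism for finding swaps that are orthogonal to the hyperplane, and in fact for a generic colouring no such swaps exist. The paper avoids this entirely: it does \emph{not} induct on $d$, but instead works directly in $\RR^d$ via a single hyperplane-perturbation argument (perturbing $w$ to a nearby $w'$ so that the sign of $\inner{w'}{\cdot}$ certifies contradictory swaps).

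\textbf{Working directly with $H$-factors loses the symmetry you need.} Your averaging claim that the mean of $\sigma(F)$ over all $H$-factors has small norm is correct, but it does not by itself produce a single good $F$; you still need to show a norm-decreasing swap exists whenever $\norm{\sigma(F)}$ is large. The paper's argument for this (its Section~3.3) relies crucially on the fact that, for a $K_r$-factor, every edge of $F$ is removed by exactly the same number of vertex-swaps $u\leftrightarrow v$, and every non-edge is added by the same number. This lets one compare the average weight of edges in $F$ to those outside $F$ and force a contradictory swap. For a general $H$, this edge-transitivity fails, and your proposal offers no substitute. The paper handles general $H$ by first finding a balanced $K_r$-factor and then embedding $H$ into each clique using all $r!$ permutations in blocks, which is a genuinely different reduction that you are missing.

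\textbf{A numerical slip.} The inequality $(d+1)^{\binom{r}{2}} \leq (8dr)^{d+1}$ is false once $r$ is moderately large relative to $d$ (the left side grows like $\exp(r^2)$, the right like $\exp(d\log r)$). The paper's exponent $(8dr)^{d+1}$ arises not from counting edge-colour types of an $H$-copy but from bounding $\card{X}$, the number of possible colour-difference vectors of a vertex-swap, which is at most $(4r)^{d+1}$.

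Finally, you yourself identify ``the hardest step'' --- showing the swap vectors are rich enough in direction --- but give no approach beyond ``a second layer of pigeonhole''. This is exactly where the paper's content lies (Lemmas~3.5--3.7 and Observation~3.8), and it requires a careful geometric argument about angles between $g(X)$ and $w^\perp$, not pigeonhole on colour profiles.
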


We discuss the history of the problem of finding balanced subgraphs in \Cref{subsec:history} and then give an outline of our proof in \Cref{subsec:outline}.
In \Cref{sec:deduction}, we show how \Cref{thm:main} implies \Cref{thm:main-k-colouring}, and then we give the full details of the proof of \Cref{thm:main} in \Cref{sec:clique-factor,sec:simplex-bound,sec:h-factor}.
Finally, in \Cref{sec:conclusion}, we give some concluding remarks and suggest directions for future research. 

\subsection{History and known results}
\label{subsec:history}

There have been several different lines of research which can be viewed in terms of finding balanced subgraphs of coloured graphs.
For example, after a series of partial results, F\"{u}redi and Kleitman \cite{FK92} showed that, if the edges of $K_n$ are coloured from the palette $\set{1,\dotsc,n-1}$, then there is a spanning tree with edges having colours summing to $0\mod n$.
This result was later generalised by Schrijver and Seymour \cite{SS91}.

In a similar direction, Caro and Yuster \cite{CY16} showed that every balanced colouring of $K_n$ with colours $\set{-r,\dotsc,r}$ contains almost zero-sum copies of a fixed spanning graph with bounded maximum degree. 
This was then extended by Caro, Hansberg, Lauri, and Zarb \cite{CHLZ20}, who worked on the following problem of, given a 2-colouring $f\from E(G) \to \set{-1,1}$, what conditions on $\abs{f(G)}$ guarantee a zero-sum spanning tree (for various graphs $G$, amongst other results).
Moreover, they asked whether every 2-edge-coloured complete graph of order $4n$ with equally many edges of each colour contains a perfect matching which also contains equally many edges of each colour.

This question was answered in the affirmative by Ehard, Mohr, and Rautenbach \cite{EMR20}, and by Kittipassorn and Sinsap \cite{KS23}, who proved that such a perfect matching must always exist. The latter also asked whether this result on perfect matchings can be generalised to many colours.

This question was soon answered by means of a counterexample, as Pardey and Rautenbach \cite{PR22}, amongst other results, exhibited a 3-colouring of the edges of $K_6$ with no perfectly balanced perfect matching (i.e.\ with two edges in each colour). This naturally leads to the question of finding a perfect matching which is merely as close to balanced as possible.
To this end, define the \emph{deviation} of a perfect matching $M\sseq K_{2kn}$ with respect to a balanced colouring $c\from E(K_{2kn}) \to [k]$ to be
\begin{equation}
\label{eq:matching-deviation}
    f_c(M) \defined \sum_{i=1}^k \abs[\big]{\card{c^{-1}(i) \inter M} - n}.
\end{equation}
The quantity $f_c(M)$ can be seen to measure how far the colouring of $M$ is from being balanced.
Pardey and Rautenbach \cite{PR22} thus relaxed the question of \cite{KS23} to the following conjecture.
\begin{conjecture}
\label{conj:balanced-perfect-matching}
    If $n$ and $k$ are positive integers, and $c\from E(K_{2kn}) \to [k]$ is a balanced colouring, then there is a perfect matching $M$ of $K_{2kn}$ with deviation satisfying
    \[f_c(M)\leq O(k^2).\]
\end{conjecture}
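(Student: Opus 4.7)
Since \Cref{conj:balanced-perfect-matching} concerns perfect matchings (the special case $H = K_2$, $r=2$), the natural strategy is a local-swap argument. Starting from an arbitrary perfect matching $M_0$ of $K_{2kn}$, I would iteratively apply $2$-swaps: choose edges $uv, xy \in M$ and replace them with $\set{ux, vy}$ or $\set{uy, vx}$, producing a new perfect matching. Each such swap alters the count of at most four colour classes by at most $2$, giving a bounded change in $f_c$. The procedure terminates once $f_c(M) \leq C k^2$ for the target constant $C$.

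The core technical step would be a \emph{swap-existence lemma}: whenever $f_c(M) > Ck^2$, some $2$-swap strictly reduces $f_c(M)$. By pigeonhole, a large deviation forces some colour $i$ to be overrepresented by $\Omega(k)$, yielding many pairs of matching edges $e_1, e_2 \in M$ of colour $i$. For each such pair on vertex set $\set{u,v,x,y}$, one examines the two non-matching edges on these vertices that could replace them; a good swap requires these replacement edges to have underrepresented colours. Via a double-counting argument summing over ordered pairs $(e_1, e_2)$ of colour-$i$ matching edges and analysing the colour distribution of the associated replacement edges (relative to the global balance guaranteed by $c$ being balanced), one should be able to deduce that a reducing swap exists whenever $f_c(M)$ is sufficiently large.

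The main obstacle is pinning down the precise quadratic dependence $O(k^2)$. A crude analysis in which each swap reduces $f_c$ by a single unit and every colour demands $\Omega(k)$ rebalancing operations yields only $O(k^3)$, since each swap can in principle upset other colour counts by $O(1)$. Achieving $O(k^2)$ likely requires either arguing that each swap reduces $f_c$ by a constant proportion of the current excess of the most overrepresented colour, or employing more sophisticated exchange operations along short alternating cycles of length $6$ or $8$. In addition, one must rule out ``deadlock'' configurations in which no reducing $2$-swap exists despite a large deviation, which would require a delicate structural analysis of the colour distribution restricted to the vertex pairs of $M$. It is presumably precisely this difficulty that forces the authors to settle for the much weaker bound of \Cref{thm:main-k-colouring} rather than attempting to resolve the conjecture itself.
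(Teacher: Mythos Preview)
The statement you are attempting to prove is \Cref{conj:balanced-perfect-matching}, which is a \emph{conjecture} (due to Pardey and Rautenbach) that the paper explicitly leaves open. There is no proof in the paper to compare your proposal against: the authors cite it as motivation, note the partial progress of \cite{PR22} and \cite{Hol24}, and then prove only the much weaker \Cref{thm:main-k-colouring}, whose bound $(8kr)^{(8kr)^k}$ is nowhere near $O(k^2)$. You appear to realise this yourself in your final sentence.

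Your sketch is a reasonable description of the natural local-swap heuristic, but it is not a proof, and you correctly identify the genuine obstruction: the swap-existence lemma you need is not known. In particular, the step ``via a double-counting argument \dots\ one should be able to deduce that a reducing swap exists'' is exactly the missing ingredient. Even the best known uniform bound for matchings, $4^{k^2}$ from \cite{Hol24}, is obtained by a more elaborate argument than a single $2$-swap analysis, and still falls far short of $O(k^2)$. So the gap in your proposal is not a technicality but the heart of an open problem: no one currently knows how to rule out the ``deadlock'' configurations you mention, nor how to guarantee that swaps make progress efficiently enough to reach $O(k^2)$.
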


They made some progress towards this conjecture, proving that, in the same setting, $f_c(M) \leq 3k\sqrt{kn \ln (2k)}$.

Further progress was made towards \Cref{conj:balanced-perfect-matching} by the second author \cite{Hol24}, who found a uniform upper bound for $\min_M f_c(M)$, proving that, for any colouring $c$ of $K_{2kn}$, $f_c(M)\leq 4^{k^2}$ for some perfect matching $M$ of $K_{2kn}$. The generalisation to arbitrary $H$ factors was also considered in \cite{Hol24}.
Indeed, we may generalise the definition of deviation from \eqref{eq:matching-deviation} to the following, where $F$ is an $H$-factor of $K_{rnk}$, the graph $H$ has $m$ edges and $r$ vertices, and $c\from E(K_{rnk}) \to [k]$ is a balanced colouring.
\begin{equation}
    \label{eq:deviation}
    f_c(F)\defined \sum_{i=1}^k\abs[\big]{\abs{c^{-1}(i)\inter F} - nm}.
\end{equation}
The suggested generalisation to $H$-factors was as follows.

\begin{question}
\label{question:balanced-factor}
    For which graphs $H$ on $r$ vertices and $m$ edges is there a function $h_H\from \ZZ\to\ZZ$ with the following property?
    
    For any $n$ and any number $k$ of colours, any colour-balanced $c\from K_{rnk}\to [k]$ (that is, such that for every $i$ in $[k]$ there are equally many edges $e$ with $c(e)=i$) contains an $H$-factor $F$ with deviation satisfying
    \begin{equation}
    \label{eq:hH-bound}
        f_c(F)\leq h_H(k).
    \end{equation}
\end{question}

While \Cref{thm:main-k-colouring} resolves \Cref{question:balanced-factor} in its entirety, the question still remains as to the optimal value of the function $h_H(k)$ in \eqref{eq:hH-bound}.
We note here that the bound provided by \Cref{thm:main-k-colouring} is worse than the bound of $4^{k^2}$ from \cite{Hol24} in the case of perfect matchings.
Improving these bounds further will be discussed in \Cref{sec:conclusion}.


\subsection{Outline of the proof of Theorem \ref{thm:main-k-colouring}}
\label{subsec:outline}

We now provide an outline of how we prove \Cref{thm:main-k-colouring}, which consists of deducing this result from \Cref{thm:main}, and then proving the latter.
Throughout this outline, and in the proof itself, we assume that $r$, $k$, and $d$ are constant, and $n$ is sufficiently large.

First, in \Cref{sec:deduction}, to deduce \Cref{thm:main-k-colouring} from \Cref{thm:main}, we simply note that we can embed the palette $[k]$ into $\SS^{k-2}$ as the vertices of a regular simplex.
Deducing that the colouring arising from \Cref{thm:main} is also balanced in the context of \Cref{thm:main-k-colouring} is then a matter of algebraic manipulations.

We prove in \Cref{sec:clique-factor} that \Cref{thm:main} holds when $H$ is the complete graph $K_r$.
In this case, we can use the additional symmetry of $H$ to our advantage.
We define the vector $w$ as the sum of the colours of the edges of the clique-factor $F$ (recalling that the colours are unit vectors), that is
\begin{equation*}
    w \defined \sum_{e\in E(F)} c(e).
\end{equation*}
The proof runs by attempting to minimise the square of the Euclidean norm of $w$ via local changes, swapping where two vertices are embedded; we will refer to the quantity $\norm{w}$ as the \emph{norm of $F$}.
We assign a weight $w_F(e)$ to each edge $e$ by taking the inner product of its colour (a unit vector, we recall) and $w$:
\begin{equation*}
    w_F(e) \defined \inner{c(e)}{w}.
\end{equation*}
The crucial observation about this parameter is that 
\begin{equation*}
    w_F(F) \defined \sum_{e\in F} w_F(e) = \norm{w}^2.
\end{equation*}
This allows us to prove in \Cref{subsec:contradictory-swaps} that, if we start with $H$-factor $F$ and apply a swap to reach $F'$, there is a constant $L$ such that, if $w_F(F) - w_F(F') > L$, then $w_F(F) - w_{F'}(F') > 0$, i.e.\ the norm of $F'$ is less than the norm of $F$.
We may assume that our $K_r$-factor $F$ has minimal norm, and thus call a swap as above ``contradictory'' as it would contradict this assumption.

As the weights in a swap are determined only be the colours of the edges involved, there are only a constant number of potential swaps (looking only at the colours of the edges).
If the norm of $F$ is large, then some of these potential swaps must be contradictory, and so it suffices to show that such a swap may actually be performed.

We may associate swaps with points of $\RR^d$ by taking the signed sum of the colours (which we recall are unit vectors) added and removed by the swap.
In \Cref{subsec:hyperplanes}, we show that we can find a vector $w'$ very close to $w$ such that, if
\begin{equation}
\label{eq:intro-target}
    \sum_{e\in F} \inner{c(e)}{w'} - \sum_{e \in F'} \inner{c(e)}{w'} > 0,
\end{equation}
then $w_F(F) - w_F(F') > L$ and the swap is contradictory.

Then, in \Cref{subsec:finding-contradiction}, we prove that there must be a swap satisfying \eqref{eq:intro-target}.
Due to the symmetry of $K_r$, every edge of the host graph $K_{nr}$ which is present in $F$ is removed from the $K_r$-factor by exactly the same number of swaps.
Likewise, every edge of the host graph which is not in $F$ is added to the $K_r$-factor by exactly the same number of swaps. 
Thus it suffices to show that the average of $\inner{c(e)}{w'}$ over edges $e\in E(F)$ is larger than the average of $\inner{c(f)}{w'}$ over edges $f\in E(K_{nr}) \setminus E(F)$.
This follows from some algebraic manipulations, and so we deduce that there must be a contradictory swap.
The existence of this contradictory swap demonstrates the existence of a $K_r$ factor of $K_{nr}$ whose norm is at most a constant, as required.

In \Cref{sec:simplex-bound}, we consider the case when $S$ is the set of vertices of a regular simplex. We then obtain an explicit constant $C$ for \Cref{thm:main} in the case $H=K_r$, such that if $\norm{w} \geq C$ then there must exist a contradictory swap.

We generalise this to $H$-factors in \Cref{sec:h-factor}.
If $H$ has $r$ vertices, then we first find a $K_r$-factor $F$ using the results of \Cref{sec:clique-factor}, and then embed an $H$-factor inside of $F$.
As there are only a constant number of possible colourings of $K_r$, we may group all copies of $K_r$ in $F$ receiving a given colouring together, and then further subdivide this into blocks of $r!$ copies of $K_r$, with a constant ($< r!$) number of copies left over. 

In each block of $r!$ copies of $K_r$, all with the same colouring, we embed one copy of $H$ into each $K_r$, one with each of the $r!$ possible injections from $V(H)$ to $V(K_r)$.
The left over copies of $H$ are embedded arbitrarily, but there are only a constant number of these.
Thus the $H$-factor inherits the colour-balance of the $K_r$-factor up to a constant additive error, proving \Cref{thm:main}.


\section{Deducing Theorem \ref{thm:main-k-colouring} from Theorem \ref{thm:main}}
\label{sec:deduction}

To prove \Cref{thm:main-k-colouring}, we embed the $k$ colours into a subset of $\SS^{d-1}$ and apply \Cref{thm:main}.
More precisely, let $Q$ be the set of vertices of a regular simplex in $\SS^{k-2}$, so $\card{Q} = k$, and denote the vertices of $Q$ as $q_1, \dots, q_k$.
Let $c\from E(K_{nrk}) \to [k]$ be the $k$-colouring we are given, and $c' \from E(K_{nrk}) \to Q$ be the colouring into the vertices of the simplex.
Then, as the $k$-colouring we are given is completely balanced, we know that the value of $\alpha$ in \Cref{thm:main} is 0, and so we know that there is a constant $C$ depending only on $r$ and $k$, and an $H$-factor $F$ of $K_{nrk}$ such that $\norm{\sum_{e\in E(F)} c'(e)} \leq C$.

Say that there are $b_i$ edges of $E(F)$ coloured $i$ by $c$.
Then the condition on the colouring becomes 
\begin{equation}
\label{eq:condition}
    \norm[\Big]{\sum_{i=1}^k b_i q_i} \leq C, 
\end{equation}
and we need to provide an upper bound on $\sum_{i=1}^k \abs[\big]{b_i - \card{E(F)} / k}$.
However, as $\sum_{i=1}^k q_i = 0$, we can set $b_i' = b_i - \card{E(F)} / k$ so that $\sum_{i=1}^k b_i' = 0$.
Then condition \eqref{eq:condition} remains unchanged, and holds for $b_i'$ as well as for $b_i$, whereas our goal is now to upper-bound $\sum_{i=1}^k \abs{b_i'}$.
As $\sum_{i=1}^k b_i' = 0$, we know that $2\sum_{1\leq i < j \leq k} b_i' b_j' = -\sum_{i=1}^k b_i'^2$.
Squaring \eqref{eq:condition} and noting that, for $i\neq j$, $\inner{q_i}{q_j} = -1/(k-1)$, we find that
\begin{equation*}
    \sum_{i=1}^k b_i'^2 + 2\sum_{1\leq i < j \leq k} b_i' b_j' \inner{q_i}{q_j} = \p[\Big]{1 + \frac{1}{k-1}} \sum_{i=1}^k b_i'^2 \leq C^2.
\end{equation*}
Finally, we can note that $\sum_{i=1}^k \abs{b_i'} \leq \sqrt{k\sum_{i=1}^k b_i'^2}$, and thus deduce
\begin{equation*}
    \sum_{i=1}^k \abs{b_i'} \leq \sqrt{k\sum_{i=1}^k b_i'^2} \leq \sqrt{k-1} \, C,
\end{equation*}
and \Cref{thm:main-k-colouring} follows.
\qed


\section{Finding a balanced clique-factor}
\label{sec:clique-factor}

As described in the above outline, we first prove an intermediate result, which states that we can find a colour-balanced $K_r$-factor.
More precisely, in this section we prove the following.

\begin{theorem}
\label{thm:clique-factor}
    Fix integers $k,r,d\geq 2$ and a subset $S\sseq \SS^{d-1} \sseq \RR^d$ of size $k$ such that $0$ is in the convex hull of $S$.
    For every integer $n$, positive real number $\alpha > 0$, and colouring $c\from E(K_{nr}) \to S$ such that $\norm{\sum_{e\in E(K_{nr})} c(e)} \leq \alpha$, there is a $K_r$-factor $F$ of $K_{nr}$ such that
    \begin{equation*}
        \norm[\bigg]{\sum_{e\in E(F)} c(e) \; } \leq C + \frac{2\alpha}{n},
    \end{equation*}
    where $C$ is a constant which only depends on $r$ and $S$.
    In the case that $S$ is a regular $(d+1)$-simplex (so $k=d+1$), we may take
    \begin{equation*}
        C = (8dr)^{(8dr)^{d+1} - 1}.
    \end{equation*}
\end{theorem}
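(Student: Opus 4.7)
The plan is to take a $K_r$-factor $F$ of $K_{nr}$ minimising $\norm{w}$, where $w \defined \sum_{e \in E(F)} c(e)$, and to derive a contradiction whenever $\norm{w} > C + 2\alpha/n$ for a suitable $C = C_{r,S}$. The basic local move is a \emph{swap}: given two distinct cliques $A,B$ of $F$ and vertices $u \in A$, $v \in B$, exchange $u$ and $v$ to obtain a new $K_r$-factor $F'$. Writing $\Delta \defined \sum_{e \in E(F')} c(e) - \sum_{e \in E(F)} c(e)$, the vector $\Delta$ is a signed sum of $4(r-1)$ unit vectors from $S$, so $\norm{\Delta} \leq 4(r-1)$, and the change in squared norm is
\[\norm{w+\Delta}^2 - \norm{w}^2 = 2\inner{\Delta}{w} + \norm{\Delta}^2.\]
Thus the swap is strictly norm-decreasing as soon as $\inner{-\Delta}{w} > \norm{\Delta}^2/2$, and it suffices to produce one such swap at $F$.

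Crucially, although the number of swaps grows with $n$, the number of distinct vectors $\Delta$ that can appear depends only on $r$ and $\card{S}$. First I would choose a perturbation $w' \in \RR^d$ with $w' \approx w$ such that, for every admissible $\Delta$, the inner product $\inner{-\Delta}{w'}$ is either non-positive or exceeds a threshold $L = L_{r,S} > 8(r-1)^2$. Geometrically this is a matter of avoiding finitely many thin slabs around the hyperplanes $\Delta^\perp$, which is possible provided $\norm{w}$ is large enough that $w$ has room to be perturbed. Taking the perturbation small enough then guarantees that any swap with $\inner{-\Delta}{w'} > 0$ also satisfies $\inner{-\Delta}{w} > \norm{\Delta}^2/2$, and is therefore norm-decreasing, contradicting the minimality of $F$.

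To exhibit such a swap, I would sum $\inner{-\Delta}{w'}$ over all swaps and exploit the vertex-transitivity of $K_r$: every edge $e \in E(F)$ is removed by the same number $N_1 = 2r(n-1)$ of swaps, every non-edge $f \notin E(F)$ is added by the same number $N_2 = 2(r-1)$, and by double counting $N_1 \card{E(F)} = N_2 \card{E(K_{nr}) \setminus E(F)}$. The total collapses to
\[N_1 \inner{w}{w'} - N_2 \inner{w_{\mathrm{tot}} - w}{w'},\]
where $w_{\mathrm{tot}} \defined \sum_{e \in E(K_{nr})} c(e)$ satisfies $\norm{w_{\mathrm{tot}}} \leq \alpha$. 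A short calculation, using $w' \approx w$ and the Cauchy-Schwarz inequality, shows that this sum is positive whenever $\norm{w}$ exceeds a quantity of order $\alpha/n$, comfortably inside the $2\alpha/n$ budget of the theorem.

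The main obstacle is the perturbation step: the value of $L$ is dictated by the worst-case near-degenerate configuration of the $\Delta$'s relative to hyperplanes through the origin, and finding a nearby $w'$ that simultaneously avoids the resulting slabs is exactly what forces $\norm{w}$ to be bounded below by the geometric constant $C$. For the regular simplex case, I expect this to be handled by a recursive argument on the dimension $d$ (carried out in \Cref{sec:simplex-bound}), tracking the possible $\Delta$'s one coordinate axis of the simplex at a time, ultimately producing the double-exponential bound $C = (8dr)^{(8dr)^{d+1}-1}$ stated in the theorem.
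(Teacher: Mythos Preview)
Your proposal is essentially the same approach as the paper's: minimise $\norm{w}$ over $K_r$-factors, use the quadratic expansion to reduce to finding a swap with $\inner{-\Delta}{w}$ large, perturb $w$ to a nearby $w'$ so that sign information on $\inner{-\Delta}{w'}$ certifies a contradictory swap, and then average over all swaps using the vertex-transitivity of $K_r$ to exhibit one with $\inner{-\Delta}{w'}>0$. Two small discrepancies are worth noting: the paper implements the perturbation not by slab-avoidance per se but by building a hyperplane $W \supseteq W_\theta$ containing all ``small'' swaps (via a pigeonhole on angles, \Cref{obs:angle-jump}) and taking $w'$ normal to it; and the explicit simplex bound in \Cref{sec:simplex-bound} is obtained not by recursion on $d$ but by directly bounding the three geometric constants $\beta_0,\beta_1,\eta$ using the lattice structure of $g(X)$ and a Gram--Schmidt argument.
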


Firstly, note that if $S$ does not span $\RR^d$, then we may pass to a linear subspace of $\RR^d$ isomorphic to $\RR^{d'}$, for some $d' < d$, in which $S$ is spanning.
As all of our bounds are increasing in $d$, this allows us to assume without loss of generality that $S$ spans $\RR^d$.

To minimise $\norm[\big]{\sum_{e\in E(F)} c(e)}$ for $F$ a $K_r$-factor, we break up this norm into a sum of inner products.
Indeed, the following definitions are crucial to our arguments.

\begin{definition}
    Define the \emph{weight} of a colour $s\in S$ with respect to a $K_r$-factor $F\sseq K_{nr}$ as
    \begin{equation*}
        w_F(s) \defined \inner{s}{w} = \sum_{f\in E(F)} \inner{s}{c(f)},
    \end{equation*}
    where
    \begin{equation*}
        w \defined \sum_{f\in E(F)} c(f).
    \end{equation*}
    The weight of an edge $e\in E(K_{nr})$ is then the weight of the colour of that edge: $w_F(e) = w_F(c(e))$.
    We interpret the weight of a set $X\sseq E(K_{nr})$ as $w_F(X) = \sum_{e\in X} w_F(e)$.
\end{definition}

We may note that the weight of the whole $K_r$-factor $F$ is
\begin{equation*}
    w_F(F) = \sum_{e\in E(F)} w_F(e) = \! \sum_{e,f\in E(F)} \! \inner{c(e)}{c(f)} = \norm[\bigg]{\sum_{e\in E(F)} c(e)}^2 = \norm{w}^2.
\end{equation*}

The above equality, despite its simplicity, is in some ways the crux of our entire argument, as will be expanded on in \Cref{subsec:contradictory-swaps}.
Assume that we have a $K_r$-factor $F$ of minimal weight; there can thus be no perturbation of $F$ which reduces the total weight.


\subsection{Contradictory swaps}
\label{subsec:contradictory-swaps}

We will now consider small perturbations of our $K_r$-factor $F$ which may decrease the weight.
In particular, we will consider swaps of two vertices.

\begin{definition}
    \label{def:swap}
    For $u,v$ vertices in distinct components of $F$, we define the \emph{swap} $u \swap v$ to be the following operation: For each vertex $w \neq u$ in the component of $u$, remove the edge $uw$ and add the edge $vw$, and for each vertex $w' \neq v$ in the component of $v$, remove the edge $vw'$ and add the edge $uw'$.
    Write the resulting $K_r$-factor as $F_{u\swap v}$

    Call the swap $u\swap v$ \emph{contradictory} if $w_{F_{u\swap v}}(F_{u\swap v}) < w_F(F)$ (noting that this will contradict our assumption that $F$ has minimal weight).
\end{definition}

Note that any swap removes precisely $2r-2$ edges from $F$, and adds $2r-2$ distinct edges to form $F_{u\swap v}$.
While it may not be obvious at first how one may go about showing that a swap is contradictory in practice, as the weight function $w_F$ has changed along with the $K_r$-factor itself, we now show that, in fact, one can tell whether a swap is contradictory only in terms of the weight function $w_F$ (or equivalently the vector $w$ for $F$).

\begin{lemma}
\label{lem:contradictory-criterion}
    Let $u,v$ be vertices of $K_{nr}$ in different components of $F$, and let $E_1$ and $E_2$ be the sets of edges incident to either $u$ or $v$ in $F$ and $F'=F_{u\swap v}$ respectively.
    Then, if
    \begin{equation}
    \label{eq:contradictory-criterion}
        \sum_{e\in E_1} w_F(e) - \sum_{e\in E_2} w_F(e) > L,
    \end{equation}
    where $L = 8(r-1)^2$, then the swap $u\swap v$ is contradictory.
\end{lemma}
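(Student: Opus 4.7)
The plan is to translate the ``contradictory'' condition into a direct norm calculation, using the key identity $w_F(F)=\norm{w}^2$. Write $F'=F_{u\swap v}$ and set
\[ w \defined \sum_{f\in E(F)} c(f), \qquad w' \defined \sum_{f\in E(F')} c(f). \]
Since $F'$ is obtained from $F$ by deleting exactly the edges of $E_1$ and inserting exactly the edges of $E_2$, we have $w' - w = \sum_{e\in E_2} c(e) - \sum_{e\in E_1} c(e)$. This is the only structural fact about swaps we need.

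Next, I would expand
\[ \norm{w'}^2 - \norm{w}^2 = 2\inner{w}{w'-w} + \norm{w'-w}^2 \]
and evaluate the inner product directly from the definition $w_F(e)=\inner{c(e)}{w}$:
\[ \inner{w}{w'-w} = \sum_{e\in E_2} w_F(e) - \sum_{e\in E_1} w_F(e). \]
Plugging this in and using $w_{F'}(F')=\norm{w'}^2$ and $w_F(F)=\norm{w}^2$ gives the clean identity
\[ w_{F'}(F') - w_F(F) = -2\Bigl(\sum_{e\in E_1} w_F(e) - \sum_{e\in E_2} w_F(e)\Bigr) + \norm{w'-w}^2. \]

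To finish, I need to bound $\norm{w'-w}^2$. A swap replaces the $2r-2$ edges incident to $u$ or $v$ in $F$ with $2r-2$ new edges, so $\card{E_1}=\card{E_2}=2r-2$. Since each colour is a unit vector, by the triangle inequality
\[ \norm{w'-w} \leq \card{E_1}+\card{E_2} = 4(r-1), \qquad \text{hence} \qquad \norm{w'-w}^2 \leq 16(r-1)^2 = 2L. \]
Combining, if the hypothesis $\sum_{e\in E_1} w_F(e) - \sum_{e\in E_2} w_F(e) > L$ holds, then $w_{F'}(F')-w_F(F) < -2L + 2L = 0$, i.e.\ the swap is contradictory.

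There is no real obstacle: once the identity $w_F(F)=\norm{w}^2$ is in hand, the proof is just Cauchy–Schwarz-style bookkeeping, and the constant $L=8(r-1)^2$ arises precisely as half the squared $\ell_2$-diameter of the swap perturbation. The only thing to be careful about is verifying that $\card{E_1}=\card{E_2}=2r-2$ (each of $u$ and $v$ has $r-1$ neighbours in its clique, and these edge sets are disjoint because $u$ and $v$ lie in different components), which justifies the triangle-inequality bound.
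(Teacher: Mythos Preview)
Your proof is correct and follows essentially the same approach as the paper: both compute $w_F(F)-w_{F'}(F')=\norm{w}^2-\norm{w'}^2$ and bound the quadratic error term by $(4(r-1))^2$. Your expansion via $\norm{w'}^2=\norm{w}^2+2\inner{w}{w'-w}+\norm{w'-w}^2$ is slightly more direct than the paper's route through the bilinear form $\inner{X}{Y}$ and the partition $E(F)=E_0\cup E_1$, but the content is identical.
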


We note before giving the proof of \Cref{lem:contradictory-criterion} that the condition \eqref{eq:contradictory-criterion} does not refer to $w_{F'}$, only $w_F$.

\begin{proof}
    We compute the difference $w_F(F) - w_{F'}(F')$.
    For ease of notation, for any sets $X,Y\sseq E(K_{nr})$ write $w_F(X) = \sum_{e\in X}w_F(e)$ and
    \[\inner{X}{Y} \defined \sum_{e\in X} \sum_{f\in Y} \inner{c(e)}{c(f)}.\]
    Let $E_0 \defined E(F) \setminus E_1 = E(F') \setminus E_2$.
    We then have
    \begin{align*}
        w_F(F) - w_{F'}(F') 
        &= \inner{E(F)}{E(F)} - \inner{E(F')}{E(F')} \\
        &= \inner{E_1}{E_1} + 2\inner{E_0}{E_1} - \inner{E_2}{E_2} - 2\inner{E_0}{E_2} \\
        &= 2(\inner{E_1}{E_0\union E_1} - \inner{E_2}{E_0 \union E_1}) - \inner{E_1}{E_1} - \inner{E_2}{E_2} + 2\inner{E_1}{E_2} \\
        &= 2(w_F(E_1) - w_F(E_2)) - \norm[\Big]{\sum_{e\in E_1\union E_2} \!\! c(e) \; }^2.
    \end{align*}
    Note that $\norm[\Big]{\sum_{e\in E_1\union E_2} c(e)}^2 \leq (4(r-1))^2$ as $\card{E_1} = \card{E_2} = 2(r-1)$.
    Thus, if $w_F(E_1) - w_F(E_2) > 8(r-1)^2$, then the swap $u \swap v$ is contradictory, and the lemma is proved.
\end{proof}


\subsection{Partitioning the space of swaps}
\label{subsec:hyperplanes}

The only information we need to determine whether or not a swap is contradictory is the (multi-)set of colours of edges removed and of edges added by the swap.
As a swap removes $2(r-1)$ edges and adds $2(r-1)$ edges, there are therefore only a constant number of possible swaps considering only the colours of edges removed and added.
Indeed, fix an arbitrary enumeration $S = \set{s_1,\dotsc,s_k}$, and set
\begin{equation*}
    Y\defined \set{y \in \set{0,1,\dotsc,2(r-1)}^k \st y_1+\dotsb+y_k = 2(r-1)},
\end{equation*}
thinking of $(y_1,\dotsc,y_k)$ as representing a collection of edges of which $y_i$ have colour $s_i$.

A swap moves between two vertices of $Y$.
The only relevant information for determining whether a swap satisfies \eqref{eq:contradictory-criterion} is the relative counts of colours between the start and end points of the swap, and so we may encode swaps as points in the space
\begin{equation*}
    X\defined \set{x \in \set{2-2r,3-2r,\dotsc,-1,0,1,\dotsc,2r-3,2r-2}^k \st \sum_{i=1}^k x_i = 0, \sum_{i=1}^k \abs{x_i} \leq 4r-4 }.
\end{equation*} 

The \emph{weight} of a swap $x\in X$---the difference $w_F(E_1) - w_F(E_2)$ as in \eqref{eq:contradictory-criterion}---is then given by
\begin{equation*}
    \sum_{i=1}^k x_i w_F(s_i) = \inner{g(x)}{w},
\end{equation*}
where
\begin{equation}
\label{eq:define-g}
    g(x) \defined \sum_{i=1}^k x_i s_i.
\end{equation}
We now introduce some more terminology concerning these swaps.
\begin{definition}
    For $L$ as in \Cref{lem:contradictory-criterion}, a swap $x\in X$ is 
    \begin{itemize}
        \item \emph{small} if $\absinner{w}{g(x)} \leq L$,
        \item \emph{increasing} if $\inner{w}{g(x)} > L$, and
        \item \emph{decreasing} if $\inner{w}{g(x)} < -L$.
    \end{itemize}
    Let $X_S\sseq X$ be the set of all small swaps.    
\end{definition}

Note that \Cref{lem:contradictory-criterion} states precisely that all decreasing swaps are contradictory.
Intuitively, if the swap between $y,y'\in Y$ is small, then $g(y')$ lies close to the hyperplane through $g(y)$ with normal vector $w$ (and vice versa).
We will now perturb this hyperplane slightly to produce a new hyperplane, $W$, so that all small swaps go between two points in the same translate of $W$.
Before we work towards a definition of $W$, we first need another definition.

\begin{definition}
\label{def:angle}
    We define the \emph{angle} $\theta(x)$ of a swap $x\in X$ as follows:
    \begin{equation*}
        \theta(x) \defined \sin^{-1}\p[\bigg]{\frac{\abs{\inner{g(x)}{w}}}{\norm{g(x)}\norm{w}}},
    \end{equation*}
    if $g(x) \neq 0$; otherwise we set $\theta(x) = 0$.
    i.e.\ $\theta(x)$ is the absolute value of the angle between $g(x)$ and the hyperplane with normal $w$.
\end{definition}

The hyperplane $W$ will contain a set of the form $W_\theta \sseq X$ for some angle $\theta$, where
\begin{equation}
\label{eq:def_w_theta}
    W_\theta\defined\set[\bigg]{x\in X \st \frac{\absinner{g(x)}{w}}{\norm{g(x)}\norm{w}} < \sin(\theta)}.
\end{equation}
Note that $W_\theta$ is the set of swaps with angle to $w^\perp$ less than $\theta$.
$W_\theta^\perp$ is then defined to be the space orthogonal to the span of $W_\theta$.

The rest of this subsection is dedicated to finding an angle $\theta$ for which $W_\theta$ may be extended to a hyperplane with the following desired properties.
The core ideas of the argument are that, if $\theta$ is sufficiently small, then the codimension of $W_\theta$ is at least 1.
Moreover, as $\theta\to 0$, the angle between $w$ and $W_\theta^\perp$ also tends to 0.
Thus, for sufficiently small $\theta$, the set $W_\theta$ can be extended to a hyperplane which is a good approximation of $w^\perp$.

If $\theta$ is also not too small in terms of $\norm{w}$, then all small swaps will lie in $W_\theta$.
This will give us some range $[\theta_0,\theta_1]$ of values for $\theta$ in which all the above properties hold.
The final condition we require is that there are no points of $x\in X$ so that $g(x)$ lies ``just outside'' of $W_\theta$, in the sense that $g(x) \in W_{\theta(1 + \zeta)}$ for some not-too-large $\zeta$.
If $\theta_1/\theta_0$ is sufficiently large, then we will show that this condition can be made to hold by a simple pigeonhole argument.
We now formalise the above intuition in a series of lemmas.

Throughout all the following lemmas, $S$ is any finite subset of $\SS^{d-1}$ of codimension zero, and $r$ is an integer.
We first provide the statements, and then move on to the proofs.

\begin{lemma}
\label{lem:angle-small-swaps}
    There is a constant $\beta_0 = \beta_0(S,r) > 0$ such that, if $\sin(\theta) \geq \beta_0 / \norm{w}$, then all small swaps are in $W_\theta$.
\end{lemma}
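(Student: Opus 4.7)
The plan is to exploit the finiteness of the swap space $X$ to produce a positive lower bound on $\norm{g(x)}$ over all $x\in X$ with $g(x)\neq 0$, and then combine this with the smallness condition $\absinner{g(x)}{w}\leq L$ to force $x$ into $W_\theta$ whenever $\sin(\theta)$ exceeds a suitable constant over $\norm{w}$.

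First I would observe that $X\sseq \ZZ^k$ is finite by its very definition (every coordinate lies in a bounded integer range), so the image $g(X)\sseq \RR^d$ is a finite subset. Consequently, the quantity
\[
    g^* \defined \min\set{\norm{g(x)} \st x\in X,\ g(x)\neq 0}
\]
is a strictly positive real number depending only on $S$ and $r$; this is the only structural input to the proof.

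Given an arbitrary small swap $x\in X$, I would split into two cases. If $g(x)=0$ then $\theta(x)=0$ by definition of the angle, so $x\in W_\theta$ for any $\theta>0$ and we are done. Otherwise $\norm{g(x)}\geq g^*$, and the smallness hypothesis yields
\[
    \frac{\absinner{g(x)}{w}}{\norm{g(x)}\norm{w}} \leq \frac{L}{g^*\,\norm{w}}.
\]
Choosing any constant $\beta_0$ strictly greater than $L/g^*$ (for instance $\beta_0\defined 2L/g^*$, which depends only on $S$ and $r$), the assumption $\sin(\theta)\geq \beta_0/\norm{w}$ then forces the above ratio to be strictly less than $\sin(\theta)$, placing $x$ inside $W_\theta$ as defined in~\eqref{eq:def_w_theta}.

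I do not expect any substantive obstacle: the argument is essentially a compactness/pigeonhole observation on the finite lattice $X$, and the only mild subtlety is isolating the degenerate case $g(x)=0$ where the defining ratio is formally undefined but the angle has been fixed at $0$. The lemma is really a clean-up step that replaces the smallness condition (a bound on the absolute inner product) by a hyperplane-membership condition (a bound on the angle), paying only the price of a $\norm{w}^{-1}$ factor that will be harmless once $\norm{w}$ is assumed large later in the proof.
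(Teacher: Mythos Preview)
Your proposal is correct and essentially identical to the paper's proof: both define the same minimum $g^*=\min\set{\norm{g(x)}:x\in X,\ g(x)\neq 0}$ and use the smallness bound $\absinner{g(x)}{w}\leq L$ together with $\norm{g(x)}\geq g^*$ to force the defining ratio below $\sin(\theta)$. Your version is in fact slightly cleaner: you handle the degenerate case $g(x)=0$ explicitly, and by choosing $\beta_0$ strictly larger than $L/g^*$ you obtain the strict inequality that the definition of $W_\theta$ requires, whereas the paper takes $\beta_0=L/g^*$ and glosses over the distinction between $\leq$ and $<$.
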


\begin{lemma}
\label{lem:angle-codim}
    There is a constant $\beta_1 = \beta_1(S,r) > 0$ such that, for all $\theta < \sin^{-1}(\beta_1)$, the codimension of $W_\theta$ in $\RR^d$ is at least 1.
\end{lemma}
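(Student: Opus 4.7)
The plan is to quantify the intuition that $g(X) \sseq \RR^d$ is finite, so any $d$ elements of $g(X)$ which are linearly independent have determinant of absolute value bounded below by some $\delta > 0$, while the requirement that $d$ such vectors all make small angle with a common hyperplane $w^\perp$ forces their determinant to be small in $\sin \theta$. These become incompatible once $\theta$ is small in terms of $S$ and $r$ only. The case in which $g(X)$ does not span $\RR^d$ is automatic, so I may assume that it does.

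First I would set $M \defined \max_{x \in X} \norm{g(x)} \leq 4(r-1)$ (using $\sum_i \abs{x_i} \leq 4(r-1)$ and $s_i \in \SS^{d-1}$), and let $\delta$ be the minimum of $\abs{\det(g(x_1),\dots,g(x_d))}$ over the finite (nonempty) family of $d$-subsets $\set{x_1,\dots,x_d}\sseq X$ with $g(x_1),\dots,g(x_d)$ linearly independent. Both $M$ and $\delta$ depend only on $S$ and $r$, and $\delta > 0$. The key geometric ingredient I would prove en route is: if $v_1, \dots, v_d \in \RR^d$ each make angle at most $\theta$ with a common hyperplane $w^\perp$, then
\[\abs{\det(v_1, \dots, v_d)} \leq d \sin(\theta) \prod_{i=1}^d \norm{v_i}.\]
This follows by working in an orthonormal basis whose last vector is $w/\norm{w}$, noting that the last coordinate of $v_i$ has absolute value at most $\norm{v_i}\sin\theta$, expanding the determinant along the last row, and bounding each resulting $(d-1)\times(d-1)$ minor by Hadamard's inequality. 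This is the only real computation in the proof.

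To conclude, I would apply the inequality with $v_i = g(x_i)$ for $x_i \in W_\theta$: if $g(W_\theta)$ spanned $\RR^d$, one could pick $x_1,\dots,x_d \in W_\theta$ with $g(x_1),\dots,g(x_d)$ linearly independent, and the inequality would force $\delta \leq d \sin(\theta) M^d$. Setting $\beta_1 \defined \delta/(d M^d)$, it follows that for $\sin\theta < \beta_1$ no such $d$-tuple can exist, so the span of $g(W_\theta)$ has dimension less than $d$, i.e.\ codimension at least $1$ in $\RR^d$. The only real subtlety is the determinant--angle bound above; everything else just amounts to extracting a uniform positive constant from a finite family.
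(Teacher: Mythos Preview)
Your proof is correct and takes a genuinely different route from the paper's. The paper argues by pure compactness: for each spanning subset $J\sseq g(X)$ it defines $f(v)=\max_{x\in J}\absinner{v}{x}$ on $\SS^{d-1}$, notes this continuous function attains a positive minimum $\eps_J$, and sets $\beta_1=\min_J\eps_J/\max_x\norm{g(x)}$; the contradiction comes from applying this with $v=\widehat{w}$. Your argument is instead quantitative linear algebra: you extract a uniform lower bound $\delta$ on the absolute determinant of any linearly independent $d$-tuple from $g(X)$, and then use the last-row expansion together with Hadamard's inequality to show that $d$ vectors all within angle $\theta$ of a common hyperplane have determinant at most $d\sin(\theta)\,M^d$. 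Your approach is more constructive (one could in principle compute $\beta_1$ from the geometry of $g(X)$) and is very close in spirit to what the paper does later in \Cref{lem:angle-codim-bound} for the simplex case, where a volume argument replaces your determinant bound; the paper's compactness proof, by contrast, is shorter and needs no determinant inequality but gives no handle on the size of $\beta_1$.
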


\begin{lemma}
\label{lem:angle-relation}
    There is a constant $\eta = \eta(S,r) > 0$ such that, if $\phi = \phi(\theta)$ is the angle between $w$ and the space $W_\theta^\perp$, then $\sin(\phi) \leq \eta \sin(\theta)$.
\end{lemma}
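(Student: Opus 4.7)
The plan is to interpret $W_\theta^\perp$ as the orthogonal complement in $\RR^d$ of the subspace $V \defined \operatorname{span}\set{g(x) \st x \in W_\theta}$, where $g$ is the map from \eqref{eq:define-g}. With this convention, the angle $\phi$ between $w$ and $V^\perp$ satisfies $\sin\phi = \norm{P_V w}/\norm{w}$, where $P_V$ is orthogonal projection onto $V$. Consequently the lemma reduces to proving
\begin{equation*}
    \norm{P_V w} \leq \eta \sin\theta \cdot \norm{w}
\end{equation*}
for a suitable $\eta = \eta(S,r)$.

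To obtain such a bound, I would pick a basis $g(x_1),\dots,g(x_\ell)$ of $V$ with each $x_i \in W_\theta$, write $P_V w = \sum_{i=1}^\ell c_i\, g(x_i)$, and observe that $c$ solves the normal equations $Mc = b$, where $M$ is the Gram matrix with entries $M_{ij} = \inner{g(x_i)}{g(x_j)}$ and $b_i = \inner{g(x_i)}{w}$. The definition \eqref{eq:def_w_theta} of $W_\theta$, together with the trivial estimate $\norm{g(x)} \leq \sum_i \abs{x_i} \leq 4(r-1)$ valid for every $x \in X$, gives $\abs{b_i} \leq 4(r-1)\sin\theta \cdot \norm{w}$, whence $\norm{b}^2 \leq 16\ell(r-1)^2 \sin^2\theta \cdot \norm{w}^2$. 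Using the identity $\norm{P_V w}^2 = c^\top M c = b^\top M^{-1} b \leq \norm{b}^2 / \lambda_{\min}(M)$, the whole problem collapses to obtaining a uniform positive lower bound on $\lambda_{\min}(M)$.

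This is the one place where I expect any genuine subtlety: in principle, a very nearly collinear choice of basis could force $\lambda_{\min}(M)$ to be arbitrarily small, and no such collinearity bound is built into the definition of $W_\theta$. The resolution is a simple but crucial finiteness argument: because $X$ is a finite set depending only on $S$ and $r$, the collection of all Gram matrices of linearly independent tuples drawn from $\set{g(x)\st x\in X}$ is itself a finite family of positive-definite matrices. The minimum over this finite family of their smallest eigenvalues is therefore some $\lambda_0 = \lambda_0(S,r) > 0$, and this lower bound applies to \emph{every} basis we might have selected. Plugging this back in, we obtain $\norm{P_V w} \leq 4(r-1)\sqrt{\ell/\lambda_0}\,\sin\theta \cdot \norm{w}$, and since $\ell \leq d$, the choice $\eta = 4(r-1)\sqrt{d/\lambda_0}$ completes the proof.
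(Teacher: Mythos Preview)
Your argument is correct. You correctly identify $\sin\phi = \norm{P_V w}/\norm{w}$, set up the normal equations $Mc=b$ for the Gram matrix $M$ of a basis drawn from $g(W_\theta)$, and use the identity $\norm{P_V w}^2 = b^\top M^{-1} b \leq \norm{b}^2/\lambda_{\min}(M)$. The bound $\abs{b_i} \leq 4(r-1)\sin\theta\,\norm{w}$ follows directly from the definition of $W_\theta$, and the crucial uniform lower bound $\lambda_{\min}(M)\geq\lambda_0(S,r)>0$ is valid because there are only finitely many linearly independent tuples in the finite set $g(X)$.

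The paper takes a closely related but more geometric route. Rather than bounding a Gram eigenvalue, it argues by compactness on the unit sphere of $V$: for each possible span $U$ of $g(W_\theta)$ (there are finitely many), it shows that every unit vector $u\in U$ satisfies $\absinner{u}{g(x)} > \eta_J^{-1}\norm{g(x)}$ for some $x$ in the spanning set, via the continuous map $u\mapsto \max_x \absinner{u}{g(x)}$ attaining a positive minimum. Applying this to the normalised projection $\pi_U(\widehat{w})/\sin\phi$ immediately yields $\sin\phi \leq \eta_J\sin\theta$. Both proofs hinge on the same finiteness of $X$; yours packages the quantitative independence as an eigenvalue bound and gives an explicit formula for $\eta$ in terms of $\lambda_0$, while the paper's version avoids linear algebra in favour of a direct compactness argument. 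Neither is strictly simpler, but your approach would mesh more directly with the explicit-constant computations in \Cref{sec:simplex-bound}, where the paper in fact switches to a Gram--Schmidt calculation closer in spirit to what you do here.
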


\begin{observation}
\label{obs:angle-jump}
    For all $\eta\geq 1$, if $\norm{w}\beta_1 / \beta_0 \geq \eta^{\card{X}}$ then there is some angle $\theta$ with 
    \[\frac{\beta_0}{\norm{w}} \leq \sin(\theta) \leq \frac{\beta_0\eta^{\card{X}-1}}{\norm{w}}\]
    such that, for all $x\in X\setminus W_\theta$, the angle $\theta(x)$ between $g(x)$ and $w^\perp$ satisfies
    \[sin(\theta(x)) \geq \eta\sin(\theta)\]
    
\end{observation}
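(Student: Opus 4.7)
My plan is to prove the observation via a pigeonhole argument along a geometric sequence of candidate angles. I would set $a_j \defined \beta_0 \eta^j / \norm{w}$ for $j = 0, 1, \dots, \card{X}$, noting that the hypothesis $\norm{w}\beta_1/\beta_0 \geq \eta^{\card{X}}$ is exactly $a_{\card{X}} \leq \beta_1$, which in particular keeps all candidate sine values below $1$ so that $\arcsin a_j$ is defined. The conclusion then amounts to choosing $\sin(\theta) = a_j$ for some $j \in \set{0, 1, \dots, \card{X}-1}$, giving precisely $\card{X}$ candidate values to select among.

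Next, I would partition the interval $[a_0, a_{\card{X}})$ into the $\card{X}$ pairwise disjoint half-open subintervals $I_j \defined [a_j, a_{j+1})$ for $j = 0, 1, \ldots, \card{X}-1$. The key observation is that choosing $\sin(\theta) = a_j$ meets the conclusion if and only if $I_j$ contains no value $\sin(\theta(x))$ for any $x \in X$: by \eqref{eq:def_w_theta} the condition $x \in X \setminus W_\theta$ is exactly $\sin(\theta(x)) \geq \sin(\theta) = a_j$, and then $\sin(\theta(x)) \geq \eta \sin(\theta) = a_{j+1}$ is equivalent to $\sin(\theta(x)) \notin [a_j, a_{j+1})$. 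The use of half-open rather than open intervals on the left is essential here, since $W_\theta$ is defined with strict inequality.

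The potential obstacle is that $\card{X}$ disjoint intervals and $\card{X}$ elements of $X$ leave no slack for a naive pigeonhole; this is the main subtlety of the proof. The resolution is to notice that $0 \in X$ (the zero vector trivially satisfies both defining constraints of $X$) and that $g(0) = 0$, so $\sin(\theta(0)) = 0$ by the convention in \Cref{def:angle}. Since $0 \notin [a_0, a_{\card{X}})$, the element $0$ never blocks any $I_j$, so at most $\card{X} - 1$ of the $\card{X}$ intervals are blocked and at least one is empty. Taking $\theta = \arcsin(a_j)$ for such an empty $I_j$ then yields the required angle, and no further estimates are needed beyond this combinatorial bookkeeping.
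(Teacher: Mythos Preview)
Your proof is correct and follows the same pigeonhole idea as the paper: the paper orders $X$ by increasing $\theta(x)$ and locates a multiplicative gap of factor $>\eta$ between consecutive sine values, whereas you partition a geometric sequence of candidate sines into $\card{X}$ half-open intervals and pigeonhole. Your explicit use of $0\in X$ (with $\sin(\theta(0))=0$, hence blocking no interval) to gain the one unit of slack needed for the count is precisely what the paper's one-sentence sketch leaves implicit.
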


Indeed, \Cref{obs:angle-jump} follows from considering $X = \set{x_1,x_2,\dotsc,x_{\card{X}}}$ in increasing order of $\theta(x)$: if $j$ is maximal such that $x_j \in W_{\theta}$, then either $\sin(\theta(x_{\card{X}})) / \sin(\theta(x_j)) \leq \eta^{\card{X} - 1}$, or there is some $i \geq j$ such that $\sin(\theta(x_{i+1})) / \sin(\theta(x_i)) > \eta$.

We now prove \Cref{lem:angle-codim,lem:angle-relation,lem:angle-small-swaps}.

\begin{proof}[Proof of \Cref{lem:angle-small-swaps}]
    Let $x\in X$ be a small swap. 
    We define the constant
    \begin{equation}
        \label{eq:beta_0-definition}
        \beta_0 = \frac{L}{\min\set{\norm{g(x)} \st x \in X, \, g(x) \neq 0}}.
    \end{equation}
    Then, by definition, $\absinner{g(x)}{w} \leq L$ and $\absinner{g(x)}{w} = \norm{g(x)} \norm{w} \sin(\theta(x))$.
    Therefore 
    \[\sin(\theta(x)) \leq \frac{L}{\norm{w}\norm{g(x)}} \leq \frac{\beta_0}{\norm{w}} \leq \sin(\theta).\]
    We therefore see that $\theta(x) < \theta$, and so $x$ is in $W_\theta$, as required.
\end{proof}

\begin{proof}[Proof of \Cref{lem:angle-codim}]
    There is a finite list of subsets of $g(X)$ with codimension 0.
    For each such $J\sseq g(X)$, we claim that there is some $\eps_J > 0$ such that, for any unit vector $v$, there is some $x\in J$ with $\absinner{v}{x} \geq \eps_J$.
    
    Indeed, let $f\from \SS^{d-1} \to \RR$ be given by $f(v) = \max_{x\in J} \absinner{v}{x}$.
    Then $f$ is a continuous function on a compact space and so attains its minimum: let $v_0\in \SS^{d-1}$ be such that $f(v_0)$ is minimal.
    It thus suffices to prove that $f(v_0) > 0$.
    It is immediate that $f(v_0) \geq 0$, and $f(v_0)=0$  would imply that $\inner{x}{v_0} = 0$ for all $x\in J$, contradicting the fact that $J$ has codimension 0.
    Thus $\eps_J > 0$.

    Then we take 
    \[\beta_1 = \frac{\min_J (\eps_J)}{\max_{x\in X\setm{0}} \norm{g(x)}}.\]
    Now assume for contradiction that, for some $\theta$ with $0 < \theta < \sin^{-1}(\beta_1)$, the set $W_\theta$ has codimension 0.
    Thus there is some $J\sseq g(X)$ of codimension 0 with $J \sseq W_\theta$.
    Therefore, by definition of $W_\theta$, all $x\in J$ have $\absinner{x}{w} < \norm{x}\norm{w}\abs{\sin(\theta)}$.
    But we know from the definition of $\eps_J$ that there is some $x\in J$ for which $\absinner{x}{\widehat{w}} \geq \eps_J$.
    Therefore
    \[\eps_J \leq \absinner{x}{\widehat{w}} < \norm{x}\abs{\sin(\theta)} \leq \norm{x}\beta_1 \leq \min_J(\eps_J),\]
    a contradiction, as required.
\end{proof}

\begin{proof}[Proof of \Cref{lem:angle-relation}]
    There is a finite list of subsets of $g(X)$ with codimension at least $1$.
    It thus suffices to prove that, for an arbitrary $J\sseq g(X)$ spanning some $U\leq \RR^d$ of codimension at least 1, there is a constant $\eta_J > 0$ such that, if $J\sseq W_\theta$ and the span of $W_\theta$ is also $U$, then $\sin(\phi) \leq \eta_J \sin(\theta)$.
    
    We claim that there is some some $\eta_J > 0$ such that, for any unit vector $u\in U$, there is some $x\in J$ for which 
    \begin{equation}
    \label{eq:angle-relation-claim}
        \absinner{u}{x} > \eta_J^{-1} \norm{x}.
    \end{equation}
    Indeed, this follows by similar reasoning to that used in the proof of \Cref{lem:angle-codim}:
    let $B$ be the boundary of the Euclidean unit ball in $U$ (i.e.\ the set of unit vectors), and define $f\from B\to \RR$ by $f(v) = \max_{x\in J} \absinner{v}{x}$.
    Then $f$ is a continuous function on a compact space, so attains its minimum, and the claim follows.

    It remains to show that $\sin(\phi) \leq \eta_J \sin(\theta)$ under the assumption that $J \sseq W_\theta$ and these two sets have the same span.
    
    As $U$ has codimension at least 1, there is a space $U^\perp \leq \RR^d$ of dimension at least 1.
    Let $\pi_U$ be the orthogonal projection onto $U$ and note that, by definition, $\phi$ is the angle between the vector $w$ and $U^\perp$.
    It follows that $\norm{\pi_U(\widehat{w})} = \sin(\phi)$.
    As $J \sseq W_\theta$ by assumption, for all $x\in J$ we have 
    \[\absinner{x}{\pi_U(\widehat{w})} = \absinner{x}{\widehat{w}} \leq \norm{x}\sin(\theta).\]
    
    Thus, defining $v = \pi_U(\widehat{w}) / \sin(\phi)$, for all $x\in J$ we have 
    \begin{equation}
    \label{eq:angle-relation-ub}
        \absinner{x}{v} \leq \norm{x}\sin(\theta) / \sin(\phi).
    \end{equation}

    Combining \eqref{eq:angle-relation-claim} and \eqref{eq:angle-relation-ub}, we deduce that $\eta_J^{-1} < \sin(\theta) / \sin(\phi)$. Letting $\eta = \max_J (\eta_J)$, the desired result follows.
\end{proof}

We may now define the hyperplane $W\sseq \RR^d$.
Assume that $\norm{w}\beta_1 / \beta_0 \geq \eta^{\card{X}}$, and $\theta$ is chosen as in \Cref{obs:angle-jump}.
Let $U$ be the space orthogonal to both $W_\theta$ and $w$.
Then $W$ is the space spanned by $W_\theta \union U$.
We know from \Cref{lem:angle-codim} that $W_\theta$ has codimension at least 1, and from \Cref{lem:angle-relation} that $w\notin W_\theta$.
Thus $W$ is indeed a hyperplane in $\RR^d$.
Define $w'$ to be the vector normal to $W$ with $\norm{w'} = \norm{w}$.

Let $\cZ$ be the partition of $Y$ by translates of $W$.
We now prove the following lemma.

\begin{lemma}
\label{lem:total-order}
    Let $Z_1,Z_2\in \cZ$ be distinct, and let $y_1 \in Z_1$ and $y_2\in Z_2$.
    Then $y_1 - y_2$ is an increasing swap if and only if $\inner{y_1 - y_2}{w'} > 0$.
\end{lemma}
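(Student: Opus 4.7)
The plan is to combine the angle estimates of \Cref{obs:angle-jump} and \Cref{lem:angle-relation} to show that $w'$ lies close enough in angle to $w$ that the sign of $\inner{v}{w'}$ matches the sign of $\inner{v}{w}$ for the vector $v \defined g(y_1 - y_2)$. Together with the fact that $y_1 - y_2$ cannot be small (since it lies outside $W_\theta$), this will give the desired equivalence.

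First I would observe that the hypothesis $Z_1 \neq Z_2$ translates to $v \notin W$ in $\RR^d$; since $g(W_\theta) \sseq W$, this also forces $y_1 - y_2 \in X\setminus W_\theta$, and in particular $y_1 - y_2$ actually lies in $X$ (one checks $\sum\abs{(y_1-y_2)_i}\leq 4(r-1)$). Applying \Cref{obs:angle-jump} yields $\sin\alpha \geq \eta\sin\theta$, where $\alpha \defined \theta(y_1-y_2)$ is the angle between $v$ and $w^\perp$, and \Cref{lem:angle-relation} gives $\sin\phi \leq \eta\sin\theta$ for $\phi$ the angle between $w$ and $w'$. Hence $\alpha \geq \phi$.

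Next I would decompose $w' = \cos(\phi)\, w + \sin(\phi)\, \norm{w}\, u$ for some unit vector $u \in w^\perp$, which is valid because $\norm{w'} = \norm{w}$ and $w'$ makes angle $\phi$ with $w$. Expanding $\inner{v}{w'}$ and using Cauchy--Schwarz to bound $\abs{\inner{v}{u}} \leq \norm{v}\cos\alpha$ (the norm of the projection of $v$ onto $w^\perp$), a short trigonometric calculation yields, in the case $\inner{v}{w} > 0$,
\[
    \inner{v}{w'} \;\geq\; \cos(\phi)\,\inner{v}{w} - \sin(\phi)\,\norm{w}\,\norm{v}\,\cos\alpha \;=\; \norm{v}\,\norm{w}\,\sin(\alpha - \phi) \;\geq\; 0,
\]
and the analogous argument applied to $-v$ handles the case $\inner{v}{w} < 0$. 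The degenerate case $\inner{v}{w} = 0$ cannot occur, since $\sin\alpha \geq \eta\sin\theta > 0$. Since $v \notin W$ forces $\inner{v}{w'} \neq 0$, the non-strict inequality upgrades to a strict one, so the signs of $\inner{v}{w}$ and $\inner{v}{w'}$ agree.

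Finally, since $y_1 - y_2 \notin W_\theta$, the contrapositive of \Cref{lem:angle-small-swaps} forces $y_1 - y_2$ to be non-small, hence either increasing or decreasing. Combined with the sign agreement, $y_1 - y_2$ is increasing iff $\inner{v}{w} > 0$ iff $\inner{v}{w'} > 0$, as required. The main obstacle is the middle step: since the angle bounds $\sin\alpha\geq\eta\sin\theta\geq\sin\phi$ give only $\alpha \geq \phi$ non-strictly, the strict hypothesis $v \notin W$ must be invoked as a tie-breaker to convert the non-strict trigonometric bound into the required strict sign equality.
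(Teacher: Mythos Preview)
Your argument is correct and rests on the same key inequality as the paper's: that the angle $\alpha$ between $g(y_1-y_2)$ and $w^\perp$ satisfies $\sin\alpha \geq \eta\sin\theta \geq \sin\phi$, combining \Cref{obs:angle-jump} with \Cref{lem:angle-relation}, so $\alpha\geq\phi$. The execution differs. The paper argues by contradiction: assuming the swap is increasing but $\inner{g(y_1-y_2)}{w'}<0$, it projects orthogonally onto the $2$-plane spanned by $w'$ and $g(y_1-y_2)$ and angle-chases there to force the angle of $g(y_1-y_2)$ to $w^\perp$ below $\phi$, hence (via \Cref{obs:angle-jump}) into $W_\theta$, contradicting $Z_1\neq Z_2$. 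You instead write $w'=\cos(\phi)\,w+\sin(\phi)\norm{w}u$ with $u\in w^\perp$ and read off the sign of $\inner{g(y_1-y_2)}{w'}$ from the sine subtraction formula. Your route is slightly more explicit in two respects: it yields both directions of the biconditional at once, and it invokes \Cref{lem:angle-small-swaps} directly to exclude small swaps, a step the paper's write-up leaves implicit (the converse implication there tacitly uses that a small swap would lie in $W_\theta$ and hence could not connect distinct parts of~$\cZ$). Either way the content is the same short trigonometric comparison.
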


\begin{proof}
    Assume for contradiction that $y_1 - y_2$ is increasing, i.e.\ $\inner{y_1 - y_2}{w} > L$, but $\inner{y_1 - y_2}{w'} < 0$.
    Let $P$ be the plane spanned by the vectors $w'$ and $y_1 - y_2$, and let $\pi \from \RR^d \to P$ be the orthogonal projection to $P$.
    Then, letting $\phi'$ be the angle between $\pi(w)$ and $w'$, as $\phi < \pi / 2$, we have that $\phi' \leq \phi$.

    Note that the angle between the vector $y_2 - y_1$ and $w$ is more than $\pi / 2$ and the angle between $y_2 - y_1$ and $w'$ is less than $\pi / 2$.
    Therefore the angle between $y_2 - y_1$ and $w$ is less than $\pi / 2 + \phi'$, and thus the angle between $y_2 - y_1$ and $w^\perp$ is less than $\phi' < \phi$.
    
    However, we know that $\sin(\phi) \leq \eta \sin(\theta)$, and so, by \Cref{obs:angle-jump}, we in fact have that the angle between $y_1 - y_2$ and $w^\perp$ is at most $\theta$.
    Thus $y_1 - y_2 \in W_\theta$, contradicting the fact that $Z_1$ and $Z_2$ are distinct.
\end{proof}

Note that \Cref{lem:angle-codim} implies that $\card{\cZ} > 1$, and \Cref{lem:total-order} implies that $\cZ$ inherits a total order $\prec$ from the swaps (noting that the value of $\inner{y_1 - y_2}{w'}$ is independent of the choices of $y_i \in Z_i$); write $Z_1\prec Z_2$ if every swap from $Z_1$ to $Z_2$ is increasing.
For any such $Z_1$ and $Z_2$, a swap from $Z_2$ to $Z_1$ is contradictory.
In the next section, we will prove that there must be such a swap.


\subsection{Finding a contradictory swap}
\label{subsec:finding-contradiction}

We know from the definition of $\cZ$ that each $Z\in \cZ$ is the set of vectors $x$ such that $\inner{w'}{x} = p$ for some $p$.
Assume for contradiction that there is no contradictory swap between distinct elements of $\cZ$.

Thinking now back in terms of our $K_r$-factor $F$, we see that, due to \Cref{lem:total-order}, every swap which replaces edges $E_1$ with $E_2$ must have $\inner{w'}{E_1} \leq \inner{w'}{E_2}$, where $\inner{v}{X}$ is shorthand for $\sum_{e\in X} \inner{v}{c(e)}$.
By symmetry of $K_r$, the number of swaps $u\swap v$ which remove an edge $e\in E(F)$ does not depend on $e$, and likewise for adding an edge of $E(K_{nr})\setminus E(F)$.
Thus, comparing average values of $\inner{w'}{c(e)}$, we find that
\[\frac{1}{e(F)} \sum_{e\in E(F)} \inner{w'}{c(e)} \leq \frac{1}{e(K_{nr}) - e(F)} \sum_{e\in E(K_{nr})\setminus E(F)} \inner{w'}{c(e)}.\]
Noting that $e(F) \leq nr(r-1)/2$, and recalling that $w = \sum_{e\in E(F)} c(e)$, we deduce
\[\inner{w'}{w} \leq \frac{r-1}{r(n-1)}\p[\big]{\inner{w'}{E(K_{nr})} - \inner{w'}{w}}.\]
Recalling our assumption that $\norm[\big]{\sum_{e\in E(K_{nr})} c(e)} \leq \alpha$, we see that to deduce a contradiction it suffices to prove that
\[\p[\bigg]{1 + \frac{r-1}{r(n-1)}}\inner{w'}{w} > \frac{1}{n}\norm{w'}\alpha.\]
In particular, we know from \Cref{obs:angle-jump} that the angle $\phi$ between $w$ and $w'$ satisfies $\sin(\phi) \leq \beta_0\eta^{\card{X}} / \norm{w}$, and so it suffices to prove that
\[\norm{w} \cos(\phi) > \frac{\alpha}{n}.\]

If $\norm{w} > 2\beta_0 \eta^{\card{X}}$ (which is automatically true by our assumption that the conditions of \Cref{obs:angle-jump} hold, provided we take $\beta_1 \leq 1/2$, which we may do) then $\cos(\phi) > 1/2$, so it suffices that $\norm{w} > 2\alpha / n$.

As $\theta_0$, $\eta$, and $\card{X}$ are constants, we may therefore see that there is some constant $C$ such that \Cref{thm:clique-factor} holds.


\section{Finding explicit bounds when \texorpdfstring{$S$}{S} is a simplex}
\label{sec:simplex-bound}

In this section, we consider only the case when $k=d+1$ and $S\sseq \SS^{d-1}$ consists of the vertices of a regular simplex. 
This corresponds precisely to the case in \Cref{thm:main-k-colouring}, as shown in \Cref{sec:deduction}. 
Our goal is to find an explicit value for the constant $C$ in \Cref{thm:clique-factor} in this case.

To do this, we find explicit values for the constants $\beta_0$, $\beta_1$, and $\eta$ which we do in \Cref{lem:angle-codim}, \Cref{lem:angle-relation}, and \Cref{lem:angle-small-swaps} respectively.

\begin{lemma}
    \label{lem:angle-small-swaps-bound}
    If $S$ is a regular simplex, in \Cref{lem:angle-small-swaps} we may take $\beta_0 = L = 8(r-1)^2$.
    That is, if $\theta$ is such that $\sin(\theta) \geq L/\norm{w}$, then all small swaps are in $W_\theta$.
\end{lemma}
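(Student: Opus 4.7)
The plan is to inspect the proof of \Cref{lem:angle-small-swaps}, where $\beta_0$ was defined explicitly by
\[
\beta_0 = \frac{L}{\min\set{\norm{g(x)} \st x \in X,\, g(x) \neq 0}}.
\]
Since the conclusion of \Cref{lem:angle-small-swaps} is preserved when $\beta_0$ is increased (the hypothesis $\sin(\theta) \geq \beta_0/\norm{w}$ becomes only more stringent), it suffices to show that the minimum in the denominator is at least $1$ in the regular simplex case; this will give $\beta_0 \leq L$.

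To estimate $\norm{g(x)}^2$, I would use the standard inner product structure of a regular $(d+1)$-simplex in $\SS^{d-1}$: $\inner{s_i}{s_i} = 1$, $\inner{s_i}{s_j} = -1/d$ for $i \neq j$, and $\sum_i s_i = 0$. Expanding and applying the constraint $\sum_i x_i = 0$ from the definition of $X$, one obtains
\[
\norm{g(x)}^2 = \sum_i x_i^2 - \frac{1}{d}\sum_{i\neq j} x_i x_j = \p[\Big]{1 + \frac{1}{d}}\sum_i x_i^2 - \frac{1}{d}\p[\Big]{\sum_i x_i}^2 = \frac{d+1}{d}\sum_i x_i^2.
\]

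To conclude, I would show that any $x \in X$ with $g(x) \neq 0$ satisfies $\sum_i x_i^2 \geq 2$. The only linear relation among the $s_i$ is $\sum_i s_i = 0$, so the kernel of $x \mapsto g(x)$ is the line spanned by $(1, 1, \dotsc, 1)$; intersecting with the sum-zero hyperplane defining $X$ leaves only the zero vector. Hence $g(x) \neq 0$ forces $x \neq 0$, and integrality with $\sum_i x_i = 0$ then forces at least two coordinates of $x$ to be nonzero, giving $\sum_i x_i^2 \geq 2$ and $\norm{g(x)}^2 \geq 2(d+1)/d > 1$. I foresee no substantive obstacle: the computation is elementary, and the only conceptual point is to verify the monotonicity direction in $\beta_0$ so that inflating its defined value to $L$ is legitimate, which is immediate from reading the statement of \Cref{lem:angle-small-swaps}.
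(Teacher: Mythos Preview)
Your proposal is correct and follows essentially the same approach as the paper: both reduce to showing $\min\{\norm{g(x)} : x \in X,\, g(x)\neq 0\} \geq 1$, compute $\norm{g(x)}^2 = \frac{d+1}{d}\sum_i x_i^2$ via the simplex inner products and the constraint $\sum_i x_i = 0$, and use integrality to get $\sum_i x_i^2 \geq 2$. Your explicit remarks on the monotonicity of $\beta_0$ and on why $g(x)\neq 0$ forces $x\neq 0$ add a touch of rigor that the paper leaves implicit, but the argument is otherwise identical.
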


\begin{lemma}
\label{lem:angle-codim-bound}
    If $S$ is a regular simplex, in \Cref{lem:angle-codim} we may take $\beta_1 = (8dr)^{-d}$.
    That is, for $\beta_1$ as above, for all $\theta < \sin^{-1}(\beta_1)$, the codimension of $W_\theta$ in $\RR^d$ is at least 1.
\end{lemma}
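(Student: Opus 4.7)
The plan is to make the proof of \Cref{lem:angle-codim} quantitative. Recall from that proof that $\beta_1$ arises as $\min_J \eps_J / \max_{x \in X\setm{0}} \norm{g(x)}$, where $J$ ranges over subsets of $g(X)$ spanning $\RR^d$ and $\eps_J = \min_{v\in \SS^{d-1}} \max_{x\in J}\absinner{v}{x}$. The denominator is immediate from the triangle inequality: $\norm{g(x)} \leq \sum_i \abs{x_i} \leq 4r-4 \leq 4r$. The real work is an explicit lower bound on $\min_J \eps_J$.

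Since $\eps_J$ is monotone in $J$, it suffices to handle the case $J = \set{v_1,\dots,v_d}$, a basis of $\RR^d$ contained in $g(X)$. Let $M$ be the $d\times d$ matrix with rows $v_1,\dots,v_d$. If $\max_i \absinner{v}{v_i} < \eps$ for some unit vector $v$, then $\norm{Mv}^2 = \sum_i \inner{v_i}{v}^2 < d\eps^2$, so $\sigma_{\min}(M) < \eps\sqrt{d}$. Hence $\eps_J \geq \sigma_{\min}(M)/\sqrt{d}$, reducing the task to lower-bounding $\sigma_{\min}(M)$.

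The key structural observation is that $g(X)$ lies in the full-rank lattice $L \sseq \RR^d$ obtained as the image under $g$ of $\set{x\in \ZZ^{d+1} \st \sum_i x_i = 0}$. Since every such $x$ equals $\sum_{i=1}^d x_i(e_i - e_{d+1})$, a $\ZZ$-basis of $L$ is $\set{s_i - s_{d+1} \st 1 \leq i \leq d}$. Using $\inner{s_i}{s_j} = 1$ or $-1/d$, the Gram matrix of this basis has $2(d+1)/d$ on the diagonal and $(d+1)/d$ off-diagonal, with determinant $(d+1)^{d+1}/d^d$, so $\text{covol}(L) = (d+1)^{(d+1)/2}/d^{d/2} \geq 1$. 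As the rows of $M$ are linearly independent lattice vectors, $\abs{\det M}$ is a nonzero integer multiple of $\text{covol}(L)$, giving $\abs{\det M} \geq 1$. Combining with $\sigma_{\max}(M) \leq \norm{M}_F \leq 4r\sqrt{d}$ and $\abs{\det M} = \prod_i \sigma_i(M) \leq \sigma_{\min}(M)\sigma_{\max}(M)^{d-1}$, we obtain $\sigma_{\min}(M) \geq (4r\sqrt{d})^{-(d-1)}$.

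Putting everything together yields $\eps_J \geq (4r)^{-(d-1)} d^{-d/2}$, and dividing by $4r$ gives $\beta_1 \geq (4r)^{-d} d^{-d/2}$. A direct check shows $(4r)^d d^{d/2} \leq (8dr)^d$ for all $d\geq 1$ (equivalent to $1 \leq 2^d d^{d/2}$), yielding $\beta_1 \geq (8dr)^{-d}$ as claimed. The main obstacle is recognising the lattice structure $g(X) \sseq L$ and computing $\text{covol}(L)$ explicitly from the Gram matrix of the simplex differences; the remaining manipulations are standard singular-value bookkeeping.
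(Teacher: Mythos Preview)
Your proof is correct and shares its central idea with the paper's: both exploit that $g(X)$ sits in a full-rank lattice $L$ of covolume $(d+1)^{(d+1)/2}/d^{d/2} \geq 1$, so any $d$ linearly independent vectors $v_1,\dots,v_d \in g(X)$ span a parallelepiped of volume at least $1$, while each $v_i$ has norm at most $4r$. The packaging differs. The paper argues geometrically: it bounds the volume of the parallelotope $P$ on $\pm v_1,\dots,\pm v_d$ above by (width in direction $w$) $\times$ (cross-section area) $\leq 8dr\sin(\theta)\cdot(8dr)^{d-1}$, and below by $1$, forcing $\sin(\theta) \geq (8dr)^{-d}$. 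You instead make the original compactness proof of \Cref{lem:angle-codim} quantitative via singular values: $\eps_J \geq \sigma_{\min}(M)/\sqrt{d}$, then $\sigma_{\min}(M) \geq \abs{\det M}/\sigma_{\max}(M)^{d-1} \geq (4r\sqrt{d})^{-(d-1)}$. Your route is arguably cleaner (the determinant--singular-value inequality replaces the somewhat ad hoc ``width times cross-section'' estimate) and even yields a marginally sharper constant before the final relaxation to $(8dr)^{-d}$; the paper's route is more visual. Both are short and rest on exactly the same lattice computation.
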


\begin{lemma}
\label{lem:angle-relation-bound}
    If $S$ is a regular simplex, in \Cref{lem:angle-relation} we may take $\eta = (8dr)^{3d^2}$.
    That is, for $\eta$ as above and $\phi$ the angle between $w$ and $W_\theta^\perp$, we have $\sin(\phi) \leq \eta\sin(\theta)$.
\end{lemma}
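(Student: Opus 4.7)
The plan is to make each step of the proof of Lemma~\ref{lem:angle-relation} quantitative when $S$ is a regular $(d+1)$-simplex. Recall that that proof produced, for each $J\sseq g(X)$ spanning a subspace $U\leq\RR^d$ of codimension at least $1$, a constant $\eta_J$ such that for every unit vector $u\in U$ some $x\in J$ satisfies $\absinner{u}{x}\geq \eta_J^{-1}\norm{x}$, and then took $\eta = \max_J \eta_J$. Thus it suffices to prove $\eta_J \leq (8dr)^{3d^2}$ for each such $J$.

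The core idea is to exploit the integer structure of $g(X)$ in a basis of the simplex. Since $\sum_{i=1}^{d+1} s_i = 0$, the vectors $s_1,\dotsc,s_d$ form a basis of $\RR^d$, and writing $g(x) = \sum_{i=1}^d (x_i - x_{d+1})\,s_i$ identifies each $g(x)$ with an integer coordinate vector $y(x)\in\ZZ^d$ of $\ell^\infty$-norm at most $4r-4$. A direct calculation shows that the Gram matrix $\widetilde G = (\inner{s_i}{s_j})_{i,j\in[d]} = \tfrac{d+1}{d}I - \tfrac{1}{d}\bfone\bfone^\top$ has smallest eigenvalue $1/d$, while $\norm{g(x)}^2 = \tfrac{d+1}{d}\sum_i x_i^2 \leq 16r^2$ gives $\norm{g(x)}\leq 4r$.

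Now fix such a $J$ with $\dim U = k\leq d-1$ and pick a basis $B=\{b_1,\dotsc,b_k\}\sseq J$ of $U$. Let $Y\in\ZZ^{k\times d}$ be the matrix whose rows are the coordinate vectors of the $b_i$. The Gram matrix of $B$ equals $Y\widetilde G Y^\top$, and from $\widetilde G \succeq (1/d)I$ we obtain $\lambda_{\min}(Y\widetilde G Y^\top) \geq (1/d)\lambda_{\min}(YY^\top)$. Because $Y$ is an integer matrix of full row rank, the Cauchy--Binet formula expresses $\det(YY^\top)$ as a sum of squares of integer minors, so $\det(YY^\top) \geq 1$; combined with the trace bound $\lambda_{\max}(YY^\top) \leq kd(4r-4)^2 \leq 16d^2r^2$, this yields $\lambda_{\min}(YY^\top) \geq \lambda_{\max}^{-(k-1)} \geq (16d^2r^2)^{-(d-2)}$.

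For any unit $u\in U$ we therefore have $\sum_i \inner{u}{b_i}^2 = u^\top Y\widetilde G Y^\top u \geq d^{-1}(16d^2r^2)^{-(d-2)}$, so some $b_i$ satisfies $\absinner{u}{b_i}\geq d^{-1}(4dr)^{-(d-2)}$. Dividing by $\norm{b_i}\leq 4r$ gives $\absinner{u}{b_i}/\norm{b_i} \geq d^{-1}(4dr)^{-(d-1)} \geq (8dr)^{-d}$, which is comfortably larger than $(8dr)^{-3d^2}$. Hence $\eta_J \leq (8dr)^{3d^2}$, and taking the maximum over the finite collection of $J$ finishes the proof. The main obstacle is keeping the integrality step uniform across all $J$; the generous exponent $3d^2$ is chosen to absorb the various constant losses incurred through Cauchy--Binet, the trace bound, and the final normalisation, so that no special case needs separate treatment.
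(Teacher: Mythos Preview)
Your argument is correct and takes a genuinely different route from the paper's proof.  The paper picks a basis $g(x_1),\dotsc,g(x_m)$ of the span of $W_\theta$, runs Gram--Schmidt to produce an orthonormal basis $y_i = \sum_{j\leq i} a_{ij} g(x_j)$, and then bounds the coefficients $a_{ij}$ by a recursion: the key geometric input is a lower bound $D\geq (8dr)^{-d}$ on the altitude of any non-degenerate simplex with vertices in $g(X)$, obtained from a lattice volume argument.  This recursion is what drives the final exponent up to roughly $d^2$.  You replace all of that by a single eigenvalue estimate: expressing the $b_i$ in the integer basis $s_1,\dotsc,s_d$, you lower-bound $\lambda_{\min}(Y\widetilde G Y^\top)$ via $\widetilde G \succeq d^{-1}I$, the Cauchy--Binet bound $\det(YY^\top)\geq 1$, and a trace bound on $\lambda_{\max}$.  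Your approach is shorter and in fact yields $\eta_J \leq (8dr)^{d}$, which is substantially better than the claimed $(8dr)^{3d^2}$; the paper's Gram--Schmidt bookkeeping is what costs the extra factor in the exponent.

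One slip to fix: the displayed equality $\sum_i \inner{u}{b_i}^2 = u^\top Y\widetilde G Y^\top u$ is dimensionally inconsistent, since $u\in\RR^d$ while $Y\widetilde G Y^\top$ is a $k\times k$ matrix (and beware that your use of $k$ for $\dim U$ clashes with the paper's $k = \card{S}$).  What you actually need is the inequality: if $M$ is the $k\times d$ matrix whose rows are the $b_i$ in orthonormal coordinates, then $\sum_i \inner{u}{b_i}^2 = \norm{Mu}^2$, and for a unit vector $u$ in $U = \mathrm{range}(M^\top)$ one has $\norm{Mu}^2 \geq \sigma_{\min}^2(M) = \lambda_{\min}(MM^\top) = \lambda_{\min}(Y\widetilde G Y^\top)$.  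With this correction the rest of your chain of inequalities goes through unchanged.
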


\begin{proof}[Proof of \Cref{lem:angle-small-swaps-bound}]
    Recalling the definition of $\beta_0$ in \eqref{eq:beta_0-definition}, we see that it suffices to prove that $\min\set{\norm{g(x)} \st x \in X, \, g(x) \neq 0} \geq 1$.
    In fact, we claim that 
    \begin{equation}
    \label{eq:min-lattice-distance}
        \min\set{\norm{g(x)} \st x \in X, \, g(x) \neq 0} = \sqrt{2+\tfrac{2}{d}}.    
    \end{equation}
    Indeed, for arbitrary non-zero $x\in X$, we have
    \[\norm{g(x)}^2 = \inner[\bigg]{\sum_{i=1}^{d+1} x_i s_i}{\sum_{i=1}^{d+1} x_i s_i} = \sum_{i=1}^{d+1}\sum_{j=1}^{d+1}x_i x_j \inner{s_i}{s_j}.\]
    We know that
    \begin{equation*}
        \inner{s_i}{s_j} = \begin{cases}
            1 &\text{ if } i=j,\\
            -\tfrac{1}{d} &\text{ otherwise.}
        \end{cases}
    \end{equation*}
    We may also use the fact that $\sum_{i=1}^{d+1} x_i = 0$ to deduce that 
    \[2\sum_{1\leq i < j \leq d+1} x_i x_j = -\sum_{i=1}^{d+1} x_i^2.\]
    It therefore follows that
    \[\norm{g(x)}^2 = \p[\big]{1+\tfrac{1}{d}} \sum_{i=1}^{d+1} x_i^2 \geq 2 + \tfrac{2}{d},\]
    as required for \eqref{eq:min-lattice-distance} to hold.
    We may in particular note that $\sqrt{2 + \tfrac{2}{d}} \geq 1$, and so we do indeed find that $\beta_0 = L$ suffices, completing the proof of \Cref{lem:angle-small-swaps-bound}.
\end{proof}

\begin{proof}[Proof of \Cref{lem:angle-codim-bound}]
    Assume for contradiction that $\theta < \sin^{-1}(\beta_1)$ and the codimension of $W_\theta$ is $0$.
    Then there are points $x_1,\dotsc,x_d\in X$ such that, for all $i$, we have $\theta(x_i) \leq \theta$, and the parallelotope $P$ with vertex set
    \[V(P) = \set[\bigg]{\sum_{i=1}^d\xi_i g(x_i) \st \xi \in \set{-1,+1}^d},\]
    has non-zero volume in $\RR^d$.

    We first bound $\vol{P}$ from below. 
    We note that the set
    \begin{equation*}
        \set[\Big]{\sum_{x\in X} \lambda_x g(x) \st \lambda \in \ZZ^X}
    \end{equation*}
    forms a lattice $\Lambda$, and we know from \eqref{eq:min-lattice-distance} that the minimal distance between two points of $\Lambda$ is greater than 1, and all vertices of $P$ lie in $\Lambda$. 
    The volume of $P$ is minimised when it is a fundamental domain of $\Lambda$. 
    We know that the volume of a regular simplex in $\RR^d$ with unit side length is $\sqrt{(d+1)2^{-d}}/d!$, and so when the side length is $\sqrt{2(d+1)d^{-1}}$, the volume is $\sqrt{(d+1)^{d+1}d^{-d}}/d!$.
    The fundamental domain is a parallelotope with volume precisely $d!$ times that of this simplex, and thus 
    \[\vol{P} \geq \sqrt{(d+1)^{d+1}d^{-d}} > 1.\]

    We now bound $\vol{P}$ from above by the volume of its cross-section along the normal to $w$ times the width in direction $w$.
    We now claim that
    \begin{equation}
    \label{eq:max-g-dist}
        \max\set{\norm{g(x)} \st x \in X} \leq 4r.
    \end{equation}
    Indeed, we know by definition that $g(x_i) = \sum_{j=1}^{d+1} x_i(j)s_j$, where we are writing $x_i(j)$ to represent the $j^\text{th}$ coordinate of $x_i$. Then, the facts that $\norm{s_j} = 1$ and $\sum_{j=1}^{d+1}\abs{x_i(j)} \leq 4(r-1) \leq 4r$ allow us to conclude \eqref{eq:max-g-dist}.
    
    Thus the furthest some vertex $\sum_{i=1}^d \xi_i g(x_i)$ of $P$ can be from $0$ is at most $4dr$, and so the cross-section of $P$ has volume at most $(8dr)^{d-1}$.

    For all $x_i$, since $\theta(x_i) \leq \sin^{-1}(\beta_1)$, we have from \eqref{eq:def_w_theta} that
    \[ \frac{\absinner{g(x_i)}{w}}{\norm{g(x_i)}{\norm{w}}} \leq \sin^{-1}(\beta_1).\]
    Combining the above with inequality \eqref{eq:max-g-dist}, we find that $\absinner{g(x_i)}{w} \leq 4r \norm{w} \sin^{-1}(\beta_1)$. However, every vertex of $P$ is a sum of at most $d$ of these $x_i$ so if $x$ is a vertex of $P$, we have $\absinner{x}{w} \leq 4dr \norm{w} \sin^{-1}(\beta_1)$. The width of $P$ in direction $w$ is thus at most $8dr \sin^{-1}(\beta_1)$. We thus have
    \[1 < \vol{P} \leq (8dr)^d \sin^{-1}(\beta_1),\]
    and hence, since $\beta_1 = (8dr)^{-d}$, we obtain a contradiction and thus the codimension of $W_\theta$ must be at least $1$. Thus $\beta_1 = (8dr)^{-d}$ suffices (and we note that $\beta_1 \leq 1/2$, so the assumption of \Cref{obs:angle-jump} is satisfied, as discussed at the end of \Cref{sec:clique-factor}.)
\end{proof}

\begin{proof}[Proof of \Cref{lem:angle-relation-bound}]
    We first note that 
    \begin{equation}
    \label{eq:sin_phi_equals}
        \sin(\phi) = \frac{\norm{\pi_{W_\theta}(w)}}{\norm{w}},
    \end{equation}    
    and so we will find an upper bound on $\norm{\pi_{W_\theta}(w)}$.
    Indeed, if $y_1, \dotsc,y_m$ is an orthonormal basis for $\text{span}(W_\theta)$, then 
    \begin{equation}
    \label{eq:norm_pi_W_bound}
        \norm{\pi_{W_\theta}(w)} \leq \sum_{i=1}^m \absinner{y_i}{w}.
    \end{equation}
    We therefore take some points $x_1,\dotsc,x_m\in X$ such that $g(x_1),\dotsc,g(x_m)$ are a basis for $W_\theta$ and apply Gram--Schmidt orthogonalisation so that $y_i = \sum_{j\leq i} a_{ij} g(x_j)$ is an orthogonal basis, and track upper bounds for the coefficients $a_{ij}$.
    These upper bounds will be derived in turn from a parameter $D$, defined as follows.

    We let $D$ be the minimum distance from any point in $g(X)$ to any hyperplane spanned by points in $g(X)$. 
    This is the minimum height of a non-degenerate simplex in $g(X)$ of any dimension at most $d$. 
    Since the base of such a simplex consists of $d-1$ points of norm at most $4r$, the base has area at most $(8r)^{d-1}$.
    We recall from the proof of \Cref{lem:angle-codim-bound} that the volume of such a simplex is strictly greater than $1/d!$, and so the height is at least $(d!(8r)^{d-1})^{-1}$. 
    Thus 
    \begin{equation}
    \label{eq:D_bound}
        D \geq \frac{1}{d!(8r)^{d-1}} \geq (8dr)^{-d}.
    \end{equation}

    Let $g(x_1),\dotsc,g(x_m)$ be a basis for $W_\theta$ consisting of points in $g(X)$. 
    We now apply Gram-Schmidt orthonormalisation to obtain an orthonormal basis $y_1,\dotsc,y_m$ for $W_\theta$ satisfying 
    \begin{equation}
    \label{eq:def_of_a}
        y_i = \sum_{j\leq i} a_{ij} g(x_j)
    \end{equation} 
    for some constants $a_{ij}$. 
    We seek to bound these coefficients. 

    Let $b_{ij} = \inner{g(x_i)}{y_j}$, so that $g(x_i) - \sum_{j<i} b_{ij} y_j$ is orthogonal to all of $y_1,\dotsc,y_{i-1}$, and so we have, for all $i$, that
    \begin{equation}
    \label{eq:yi_expression}
        g(x_i) - \sum_{j<i} b_{ij} y_j = \norm[\Big]{g(x_i) - \sum_{j<i} b_{ij} y_j} y_i
    \end{equation}
    is a multiple of $y_i$. 
    Since 
    \begin{equation}
    \label{eq:norm_g_x_bound}
        \norm{g(x_i)} \leq \sum_{j=1}^{d+1} \abs{x_i(j)} \, \norm{s_j} \leq 4r
    \end{equation}
    and $y_j$ is a unit vector, we have 
    \begin{equation}
    \label{eq:b_bound}
        \abs{b_{ij}} \leq 4r \; \text{ for all } \; i,j
    \end{equation}    
    We may then note that, for a fixed index $p$, the vector $g(x_p) - \sum_{i<p} b_{pi} y_i$ is the altitude of a simplex in $g(X)$, and so 
    \begin{equation}
    \label{eq:norm_bound}
        \norm[\Big]{g(x_p) - \sum_{i<p} b_{pi} y_i} \geq D.
    \end{equation}
    We may now rewrite equality \eqref{eq:yi_expression} using \eqref{eq:def_of_a} to find that
    \begin{equation}
    \label{eq:expanded_yp}
        \norm[\Big]{g(x_p) - \sum_{i<p} b_{pi} y_i} \sum_{i\leq p}a_{pi} \, g(x_i) = g(x_p) - \sum_{i<p} b_{pi} y_i = g(x_p) - \sum_{i<p}\sum_{j\leq i}b_{pi}\,a_{ij} \,g(x_j).
    \end{equation}
    Recalling that $g(x_1),\dotsc,g(x_m)$ is a basis, and therefore linearly independent, we may equate coefficients of $g(x_i)$ in \eqref{eq:expanded_yp}.
    We may thus deduce that, for all fixed $q < p$, we have
    \begin{equation}
    \label{eq:coefficient_comparison}
        a_{pq} \norm[\Big]{g(x_p) - \sum_{i<p} b_{pi} y_i} = -\sum_{i=q}^{p-1} b_{pi}a_{iq}.
    \end{equation}
    Moreover, by considering the coefficients of $g(x_p)$ in \eqref{eq:expanded_yp}, we may deduce that
    \begin{equation*}
        \norm[\Big]{g(x_p) - \sum_{i<p} b_{pi} y_i} a_{pp} = 1,
    \end{equation*}
    and so inequality \eqref{eq:norm_bound} tells us that
    \begin{equation}
    \label{eq:diagonal_bound}
        \abs{a_{pp}} \leq D^{-1}.
    \end{equation}
    We may now apply inequalities \eqref{eq:b_bound} and \eqref{eq:norm_bound} to deduce from \eqref{eq:coefficient_comparison} that, again for all $p$ and $q$ with $q < p$,
    \begin{equation}
    \label{eq:recursive_bound}
        \abs{a_{pq}} \leq 4rD^{-1} \sum_{i=q}^{p-1}\abs{a_{iq}} \leq 4prD^{-1} \max_{i < p} \abs{a_{iq}}.
    \end{equation}
    A simple induction applied to \eqref{eq:recursive_bound} then shows us that
    \begin{equation}
    \label{eq:a_pq_bound}
        \abs{a_{pq}} \leq (4prD^{-1})^{p-q}\abs{a_{qq}} \leq \p[\big]{4mrD^{-1}}^m,
    \end{equation}
    where we have used from \eqref{eq:diagonal_bound} that $\abs{a_{qq}} \leq 4mrD^{-1}$.

    We can now combine the above results to produce the desired upper-bound on $\sin(\phi) / \sin(\theta)$.

    First, we may rearrange the definition \eqref{eq:def_w_theta} of $W_\theta$ and apply inequality \eqref{eq:norm_g_x_bound} to find that, for all $i$,
    \begin{equation*}
        \sin(\theta) \geq \frac{\absinner{g(x_i)}{w}}{\norm{g(x_i)}\norm{w}} \geq \frac{\absinner{g(x_i)}{w}}{4r\norm{w}}.  
    \end{equation*}
    Next, we may use inequality \eqref{eq:norm_pi_W_bound} along with equality \eqref{eq:sin_phi_equals} to find that
    \begin{equation*}
        \sin(\phi) = \frac{\norm{\pi_{W_\theta}(w)}}{\norm{w}} \leq \frac{\sum_{j=1}^m \absinner{y_j}{w}}{\norm{w}},
    \end{equation*}
    and thus, combining the above two inequalities,
    \begin{equation}
    \label{eq:sin_phi_over_sin_theta_bound}
        \frac{\sin(\phi)}{\sin(\theta)} \leq \frac{4r\sum_{j=1}^m \absinner{y_j}{w}}{\max_i \absinner{g(x_i)}{w}}.
    \end{equation}
    We know from \eqref{eq:def_of_a} that 
    \begin{equation*}
        \absinner{y_j}{w} \leq \sum_{\l \leq j} a_{j\l}\absinner{g(x_\l)}{w} \leq m \, \max_{j,\l} \abs{a_{j\l}} \; \max_i \absinner{g(x_i)}{w},
    \end{equation*}
    which, when combined with inequalities \eqref{eq:sin_phi_over_sin_theta_bound} and \eqref{eq:a_pq_bound}, tells us that
    \begin{equation*}
        \frac{\sin(\phi)}{\sin(\theta)} \leq 4rm^2 \p[\big]{4mrD^{-1}}^m,
    \end{equation*}
    which is the constant bound we require.
    We may recall that $m \leq d$ and substitute in the lower bound for $D$ found in \eqref{eq:D_bound} to conclude that we may take
    \begin{equation*}
        \eta \leq 4d^2r\p[\big]{4dr(8dr)^d}^d \leq (8dr)^{3d^2}.
    \end{equation*}
    This completes the proof of \Cref{lem:angle-relation-bound}.
\end{proof}

It remains to compute the bound on the constant $C$ in \Cref{thm:clique-factor}.

In the proof of \Cref{thm:clique-factor} it was necessary only that $\norm{w}$ satisfies the assumption in \Cref{obs:angle-jump} and that $\norm{w} > 2\alpha / n$. Thus, to obtain a value of $C$, we need only that if $\norm{w} \geq C$ then $\norm{w}$ satisfies the assumption in \Cref{obs:angle-jump}. Thus we can take

\[C = \frac{\beta_0 \eta^{|X|-1}}{\beta_1},\]
for $\beta_0$, $\beta_1$, and $\eta$ as in the above three lemmas.

Since we have $|X| < (4r)^k = (4r)^{d+1}$, we can take

\begin{align*}
    C &= 8(r-1)^2 \p[\Big]{(8dr)^{3d^2}}^{(4r)^{d+1}} (8dr)^{d} \\
    &\leq (8dr)^{2 + 3d^2(4r)^{d+1} + d} \\
    &\leq (8dr)^{(8dr)^{d+1} - 1},
\end{align*}
as claimed in \Cref{thm:clique-factor}.


\section{Moving from a clique-factor to an \texorpdfstring{$H$}{H}-factor}
\label{sec:h-factor}

In this section, we deduce \Cref{thm:main} from \Cref{thm:clique-factor}.
We know from \Cref{thm:clique-factor} that there is a constant $C_{K_r}$ depending on $d,k,r,S$ such that, for every integer $n$ and colouring $c\from E(K_{nr}) \to S$ such that $\norm{\sum_{e\in E(K_{nr})} c(e)} \leq \alpha$, there is a $K_r$-factor $F'$ of $K_{nr}$ such that
\begin{equation*}
    \norm[\bigg]{\sum_{e\in E(F')} c(e) \; } \leq C_{K_r} + \frac{2\alpha}{n}.
\end{equation*}

Given such a $K_r$-factor $F'$, let $J_1,\dotsc,J_n$ be the copies of $K_r$ in $F'$.
Each $J_i$ inherits a colouring $c_i\from E(K_r) \to S$ induced by $c$.
As $\card{S} = k$, there are at most $k^{\binom{r}{2}}$ distinct colourings amongst $c_1,\dotsc,c_n$.

We now find an $H$-factor that is a subgraph of $F'$, recalling that $\card{V(H)} = r$ and so we can place one copy of $H$ inside each $K_r$.
Roughly, amongst all those copies of $K_r$ given the same colouring by $c$, we want to cover each edge by a copy of $H$ an equal number of times.
We do this approximately, and then account for the error term, which we can show is constant.

For a given $c'\from E(K_r) \to S$, let $X_{c'} \sseq [n]$ be the set of those $i$ such that $c_i = c'$.
Arbitrarily partition each $X_i$ as
\[X_{c'} = Y_{c',1} \union \dotsb \union Y_{c',h_{c'}} \union Z_{c'},\]
where $\card{Y_{c',1}} = \dotsb = \card{Y_{c',h_{c'}}} = r!$, and $\card{Z_{c'}} < r!$ (so $h_{c'} = \floor{\card{X_{c'}} / r!})$.

For each $Y_{c',j}$, let $f \from Y_{c',j} \to S_r$ be a bijection to the set of permutations of $[r]$, which of course has $r!$ elements.
Then for each element $t\in Y_{c',j}$, embed a copy of $H$ in $J_t$, embedding vertices at locations determined from the permutation $f(t)$.

For the copies of $K_r$ in $Z_{c'}$, we choose embeddings of $H$ arbitrarily. 
As $\card{Z_{c'}} < r!$, this will only contribute a constant error term.
Let this $H$-factor of $X_{c'}$ be $F_{c'}$. 
For any individual copy of $H$ embedded in $J_i$ (a copy of $K_r$), we have
\begin{equation*}
    \norm[\bigg]{\sum_{e\in E(H)} c(e) - \frac{e(H)}{e(K_r)}\sum_{e\in E(J_i)} c(e) \; } \leq 2\binom{r}{2}.
\end{equation*}
Thus, noting that the two sums cancel out when summed over a set $Y_{c',j}$ in the above embeddings, we have
\begin{equation*}
    \norm[\bigg]{\sum_{e\in E(F_{c'})} c(e) - \frac{e(H)}{e(K_r)}\sum_{i\in X_{c'}}\sum_{e\in E(J_i)} c(e) \; } \leq 2r!\binom{r}{2}.
\end{equation*}
Let $F$ be the $H$-factor of $K_nr$ consisting of the union of all $F_{c'}$s. Then, summing over all classes, we have
\begin{equation*}
    \norm[\bigg]{\sum_{e\in E(F)} c(e) - \frac{e(H)}{e(K_r)}\sum_{e\in E(F')} c(e) \; } \leq 2k^{\binom{r}{2}}r!\binom{r}{2}.
\end{equation*}
Thus we have
\begin{align*}
    \norm[\bigg]{\sum_{e\in E(F)} c(e) \; } 
    &\leq \frac{e(H)}{e(K_r)} \left( C_{K_r}+ \frac{2\alpha}{n} \right) + 2k^{\binom{r}{2}}r!\binom{r}{2} \\
    &= \left( \frac{e(H)}{e(K_r)} C_{K_r} + 2k^{\binom{r}{2}}r!\binom{r}{2} \right) + \frac{2 \alpha}{n},
\end{align*}
and so
\begin{equation*}
    C = \frac{e(H)}{e(K_r)} C_{K_r} + 2k^{\binom{r}{2}}r!\binom{r}{2}
\end{equation*}
will do for a graph $H$ in \Cref{thm:main}.

Finally, in the case when $S$ is a regular simplex, we may substitute in the value of $C_{K_r}$ from \Cref{thm:clique-factor} and replace $k$ with $d+1$ to find that
\begin{align*}
    C \leq C_{K_r} + 2(d+1)^{r^2}r^{r+2} \leq (8dr)^{(8dr)^{d+1}-1} + (8dr)^{r^2} \leq (8dr)^{(8dr)^{d+1}},
\end{align*}
as required, completing the proof of \Cref{thm:main}.
\qed


\section{Concluding remarks}
\label{sec:conclusion}

We have proved a uniform upper bound on the optimal balance of an $H$-factor in a $K_n$ equipped with a balanced $r$-colouring.

It seems likely that the constants we obtain in \Cref{thm:main-k-colouring} and \Cref{thm:main} are also far from optimal, and improving the dependencies here would be of some interest.
In particular, it is not immediately clear that $C$ must depend on both the number of colours and the size of the graph $H$, and very little is known about lower bounds for these problems.

There are also many other directions that remain to be understood, and many potential avenues for improving our main results.
Indeed, one may note that, when there are only two colours, significantly more is known. 
For example, it is shown in \cite{HLMP24} that one can find colour-balanced copies of spanning graphs $H$ with only a bound on the maximum degree of $H$, and one can even take the host graph $G$ to be merely a graph of sufficiently large maximum degree rather than a complete graph.
With this in mind, we ask the following question, which, if answered affirmatively, would give a significant generalisation of \Cref{thm:main-k-colouring}.

\begin{question}
\label{q:general}
    For fixed integers $k,r,d,\Delta \geq 2$, is there a constant $C=C_{k,r,\Delta}$ such that the following holds?
    For every integer $n$, set $S\sseq \SS^{d-1} \sseq \RR^d$ of size $k$, colouring $c\from E(K_n) \to S$ such that $\norm{\sum_{e\in E(K_n)} c(e)} \leq 1$, and every graph $H$ on $n$ vertices with maximum degree at most $\Delta$, there is a bijection $f\from V(H)\to V(G)$ such that
    \begin{equation*}
        \norm[\bigg]{\sum_{e\in E(H)} c(f(e)) \; } \leq C,
    \end{equation*}
    where, if edge $e$ connects vertices $u$ and $v$, then $f(e)$ is understood to be the edge of $K_n$ between $f(u)$ and $f(v)$.
\end{question}

It would also be of interest to consider, in the context of \Cref{thm:main}, whether one is required to fix a finite set $S$ of colours in advance.

\begin{question}
\label{q:continuous}
    What upper bounds can be placed, for fixed integers $d,r$ and fixed $r$-vertex graph $H$, on $\norm[\big]{\sum_{e\in E(F)} c(e)}$ for $F$ an $H$-factor of a complete graph $K_{rn}$ with $c\from E(H_{rn}) \to \SS^{d-1}$?
    Can there be an upper bound independent of $n$?
\end{question}

There are of course many other potential avenues of investigation besides the above two questions.
For instance, one could ask \Cref{q:general} when $H$ is restricted to be a tree, or in \Cref{q:continuous}, one could ask for an upper bound on $\min_F \norm[\big]{\sum_{e\in E(F)} c(e)}$ over all $H$-factors $F$ in terms only of $H$ and the size of the image of $c$.


\section{Acknowledgements}

The first author is supported by the Engineering and Physical Sciences Research Council (EPSRC).
The second author is funded by the internal graduate studentship of Trinity College, Cambridge. The authors would like to thank their PhD supervisor, B\'{e}la Bollob\'{a}s, for his comments on an earlier version of the manuscript.


\bibliographystyle{abbrvnat}  
\renewcommand{\bibname}{Bibliography}
\bibliography{main}


\end{document}